 \numberwithin {equation}{section}
\newtheorem{theo}{Theorem}[section]
\newtheorem{prop}[theo]{Proposition}
\theoremstyle{definition}
\newtheorem{remark}[theo]{Remark}
\newtheorem{defi}[theo]{Definition}
\newcommand{\bequ}{\begin{equation}} %
\newcommand{\eequ}{\end{equation}}
\newcommand{\bequs}{\begin{equation*}}
\newcommand{\eequs}{\end{equation*}}
\newcommand{\beqs}{\begin{equation*}}
\newcommand{\eeqs}{\end{equation*}}
\newcommand{\Real}{\mathbb R}
\newcommand{\Nat}{\mathbb N}
\newcommand{\epsic}{\varepsilon}
\newcommand{\eps}{\varepsilon}
\newcommand{\lbda}{\lambda}
\newcommand{\lra}{\longrightarrow}
\newcommand{\Ra}{\;\Rightarrow\;}
\newcommand{\sms}{\smallsetminus}
\newcommand{\sse}{\subseteq}
\newcommand{\mbx}{\mbox}
\newcommand{\ov}{\overline}
\newcommand{\ety}{\emptyset}
\newcommand{\dom}{\operatorname{dom}}
\newcommand{\norm}{\|\cdot\|}
\newcommand{\ben}{\begin{enumerate}}
\newcommand{\een}{\end{enumerate}}
\newcommand{\eite}{\end{itemize}}
\newcommand{\bite}{\begin{itemize}}
\newcommand{\rond}{\mathcal}
\begin{document}
\title[The strong Ekeland principle]{The strong Ekeland variational principle in  quasi-pseudometric spaces}
\author{S. Cobza\c{s} }
\address{\it Babe\c s-Bolyai University, Faculty of Mathematics
and Computer Science, 400 084 Cluj-Napoca, Romania}\;\;
\email{scobzas@math.ubbcluj.ro}
\begin{abstract}  Roughly speaking, Ekeland's Variational Principle (EkVP) (J. Math. Anal. Appl.   47  (1974), 324--353) asserts the existence of  strict minima of some  perturbed versions    of lower semicontinuous functions  defined on a complete metric space.  Later,    Pando Georgiev (J. Math. Anal. Appl. \textbf{131} (1988), no.~1,
  1--21) and Tomonari Suzuki  (J. Math.
  Anal. Appl. \textbf{320} (2006), no.~2, 787--794  and  Nonlinear Anal. \textbf{72} (2010), no.~5,
  2204--2209)),  proved a  Strong Ekeland Variational Principle, meaning  the existence of  strong minima   for such perturbations.  Note that Suzuki also considered
  the case of functions defined on Banach spaces, emphasizing the key-role played by reflexivity.

  In the last years an increasing interest was manifested by many researchers to extend EkVP to the asymmetric case, that is,  to quasi-metric   spaces (see the  references). Applications to optimization, behavioral sciences, and others, were obtained.   The aim of the present paper is to extend  the strong Ekeland  principle, both Georgiev and Suzuki versions, to the  quasi-pseudometric case.  At the end we ask for the possibility to extend it to asymmetric normed spaces (i.e., the extension of Suzuki's results).

\textbf{Classification MSC 2020:} 46N10 47H10   54E35 54E50 58E30\medskip

\textbf{Key words:} quasi-metric space;  completeness in   quasi-metric spaces;  variational principles;  Ekeland variational principle; strong Ekeland principle.\\

\end{abstract}

\date{\today; published in \textbf{Mathematics} \textbf{2024}}
\maketitle

\emph{Motto:}  "\emph{Je suis tr\`es honor\'e que le CEREMADE m'ait demand\'e de parler du principe dont je porte le nom}."  Ivar Ekeland, Paris 2018.
\footnote{``I am very honored that CEREMADE invited me to speak about the principle whose name I bear."\\
CEREMADE -  Centre de Recherche en Math\'ematiques de la D\'ecision, Paris}

\section{Introduction}

A variational principle is a proposition asserting that some
function, usually bounded  below and lower semi-continuous (lsc),  attains its minimum. If the original function does not
attain its minimum then one looks for an appropriate perturbation
such that the perturbed function has a minimum. Variational
principles have numerous applications to problems of optimization,
in the study of the differentiability properties of mappings, in
fixed point theory, etc. Their origins go back to the early stage of
development of the calculus of variations and are related to the
principle of least action from physics.

Ivar Ekeland announced  in 1972, \cite{ekl72}  (the proof appeared  in 1974 in \cite{ekl74}) a theorem asserting the existence of the minimum of a small perturbation of a lower semicontinuous (lsc) function defined on a complete metric space. This result, known as Ekeland Variational Principle (EkVP), proved to be a very versatile tool in various areas of mathematics and in applications  - optimization theory, geometry of Banach spaces, optimal control theory, economics, social sciences, and others. Some of these applications are presented by Ekeland himself in \cite{ekl79}.

At the same time, it turned out that this principle is equivalent to a lot  of results in fixed point theory (Caristi fixed point theorem), geometry of Banach spaces  (drop property), and  others (see  \cite{penot86}, for instance).

Since then, many extensions of this principle have been published, a good record being given in the book by Meghea \cite{Meghea}.

A version of EkVP in $T_1$-quasi-metric spaces was proved in \cite{cobz11}. The result was extended to arbitrary quasi-metric spaces in \cite{karap-romag15}, where it was shown that the validity of this principle actually
characterizes the right $K$-completeness of the underlying quasi-metric space. A fairly complete  presentation of various situations when the validity of a variational principle, of a maximality (minimality) principle, or of a fixed point result implies a kind of completeness of the underlying space (metric, generalized metric, ordered metric, or simply  just
ordered) is given in the paper \cite{cobz18}.

Other asymmetric versions (meaning quasi-metric, quasi-uniform or in  normed or locally convex asymmetric spaces) were proved in  \cite{kassay19}, \cite{bcs18}, \cite{cobz12}, \cite{cobz19}, \cite{cobz23a},  \cite{cobz23b}, and others.  Applications of these asymmetric versions to optimization theory, behavioral sciences and other social sciences were given in the papers  \cite{ak-soub23},  \cite{bcs18}, \cite{bm-soub15} and in other papers written by Antoine Soubeyran in cooperation with various mathematicians (consult MathSciNet, ZBMATH, Google.scholar  and other similar sites).

Strong versions of EkVP were proved by Georgiev \cite{pando86,pando88} and Suzuki  \cite{suzuki06,suzuki10}. The aim of this paper is to prove a quasi-metric version of the strong Ekeland Variational principle (see Section  \ref{S.str-Ek}).  Unfortunately, I was not able to extend Suzuki's results on the equivalence between  the strong EkVP and the reflexivity of the involved Banach space, nor the converse results, meaning completeness results implied by the validity of this principle,  formulated as open problems at the end of the paper.

\section{Ekeland and the strong Ekeland variational principles in metric  and Banach spaces}\label{S.str-Ek}
\subsection{Ekeland principle}

Ekeland \cite{ekl74} proved the following result, known as Ekeland Variational Principle (EkVP).

\begin{theo}[Ekeland Variational Principle]\label{t.EkVP}
Let $(X,d)$ be a complete metric space and $f:X\to
\Real\cup\{+\infty\}$ a lsc bounded below function. Let $\epsic
> 0$ and $ x_0\in \dom f.$

Then given $\lambda > 0$ there exists $\,z=z_{\epsic,\lambda}\in
X\,$ such that
\bequ \label{eq2.EkVP}
\begin{aligned}
{\rm (a)}&\; \quad f(z) +\frac{\epsic}{\lambda} d(z,x_0)
\leq f(x_0);\\
{\rm (b)}& \;\quad  f(z) < f(x) +\frac{\epsic}{\lambda} d(z,x) \quad\mbx{for all }\; x\in X\sms \{z\}.
\end{aligned}
\eequ

If, further,
$
 f(x_0) \leq \inf f(X) + \epsic,
$
then
$$
{\rm (c)}\; \quad  d(z,x_0)\leq \lambda.\qquad\qquad\qquad\qquad\qquad\qquad$$
\end{theo}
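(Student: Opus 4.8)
The plan is to reduce both (a) and (b) to the existence of a minimal element of a suitable partial order, and then to produce that minimal element by a nested-sets argument exploiting completeness and lower semicontinuity. Writing $\alpha = \epsic/\lbda$, I would define a relation $\preceq$ on $X$ by declaring $y \preceq x$ whenever $f(y) + \alpha d(y,x) \le f(x)$. The triangle inequality and the nonnegativity of $d$ make this a genuine partial order: reflexivity is immediate; transitivity follows by adding two defining inequalities and using $d(z,x) \le d(z,y) + d(y,x)$; and antisymmetry follows because $y \preceq x$ together with $x \preceq y$ force $2\alpha d(x,y) \le 0$, hence $x = y$. The payoff is that the conclusions translate cleanly: a point $z$ with $z \preceq x_0$ satisfies (a), while a point $z$ that is minimal for $\preceq$ (that is, $y \preceq z \Rightarrow y = z$) satisfies exactly (b).

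So the real task is to construct a $\preceq$-minimal element lying below $x_0$. For each $x$, set $S(x) = \{y \in X : y \preceq x\}$; since $y \mapsto f(y) + \alpha d(y,x)$ is lsc, being the sum of the lsc function $f$ and the continuous function $d(\cdot,x)$, each $S(x)$ is closed, and it is nonempty because $x \in S(x)$. Starting from $x_0$ I would build a sequence $(x_n)$ by choosing, at each step, $x_{n+1} \in S(x_n)$ with $f(x_{n+1}) \le \inf_{S(x_n)} f + \tfrac{1}{n+1}$; this is possible because $f$ is bounded below, so the infimum is finite. Transitivity gives the nesting $S(x_{n+1}) \sse S(x_n)$.

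The crucial quantitative step is to show $\diam S(x_n) \to 0$. For $y \in S(x_{n+1})$ one has $y \in S(x_n)$, hence $f(y) \ge \inf_{S(x_n)} f$, while $y \preceq x_{n+1}$ gives $\alpha d(y, x_{n+1}) \le f(x_{n+1}) - f(y) \le \tfrac{1}{n+1}$; the triangle inequality then bounds $\diam S(x_{n+1})$ by $\tfrac{2}{\alpha(n+1)}$. With $X$ complete and the $S(x_n)$ a nested sequence of nonempty closed sets of vanishing diameter, Cantor's intersection theorem yields a single point $z$ with $\bigcap_n S(x_n) = \{z\}$. Then $z \in S(x_0)$ gives (a); and if $y \preceq z$, transitivity places $y$ in every $S(x_n)$, so $y = z$, giving (b). I expect this diameter estimate, together with the closedness of the $S(x_n)$ extracted from lower semicontinuity, to be the main technical point, since everything else is either formal (the order axioms) or a direct appeal to completeness.

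Finally, for (c) I would feed the extra hypothesis into (a): since $f(z) \ge \inf f(X) \ge f(x_0) - \epsic$, the inequality $f(z) + \alpha d(z,x_0) \le f(x_0)$ yields $\alpha d(z,x_0) \le f(x_0) - f(z) \le \epsic$, and substituting $\alpha = \epsic/\lbda$ gives $d(z,x_0) \le \lbda$.
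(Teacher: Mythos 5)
Your proof is correct and takes essentially the same route as the paper's own machinery: although the paper states Theorem \ref{t.EkVP} without proof (citing Ekeland), your order $\preceq$ and sets $S(x)$ are exactly the sets $S_\alpha(x)$ of \eqref{def.Sx-Jx} with $\alpha=\epsic/\lambda$, and your iteration $x_{n+1}\in S(x_n)$ with $f(x_{n+1})\le \inf_{S(x_n)}f+\frac{1}{n+1}$ is precisely the Picard-sequence scheme behind Propositions \ref{p1.Sx} and \ref{p1.Pic}, specialized to the metric setting where the vanishing-diameter estimate and Cantor's intersection theorem close the argument. The one quibble --- antisymmetry of $\preceq$ fails between two distinct points where $f=+\infty$ --- is harmless, since your construction never leaves $S(x_0)\subseteq\dom f$ and uses only reflexivity, transitivity, and the minimality characterization of (b).
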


The Ekeland Variational Principle is sometimes written in the following form (see, for instance, \cite{penot86} or  \cite[Lemma 3.13]{Phe93}).

 \begin{theo}\label{t.EkVPb}
Let $(X,d)$ be a complete metric space and $f:X\to
\Real\cup\{+\infty\}$ a lsc bounded below function. Let $\epsic
> 0$ and $ x_0\in \dom f.$

Then given $\lambda' > 0$ there exists $\,z=z_{\lambda'}\in
X\,$ such that
\bequ \label{eq2.EkVPb}
\begin{aligned}
{\rm (a')}&\; \quad f(z) + \lambda' d(z,x_0) \leq f(x_0);\\
{\rm (b')}& \;\quad    f(z) < f(x) + \lambda' d(z,x)  \quad\mbx{for all }\; x\in X\sms \{z\}.
\end{aligned}
\eequ
If, further,
$
f(x_0) \leq \inf f(X) + \epsic,
$
then
$$
{\rm (c')}\; \quad   d(z,x_0)\leq \epsic/\lambda'\,.\qquad\qquad\qquad\qquad\qquad\qquad$$
\end{theo}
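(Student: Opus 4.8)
The plan is to obtain Theorem \ref{t.EkVPb} as an immediate reformulation of Theorem \ref{t.EkVP} via a change of the perturbation parameter. The key observation is that the two statements are identical except for the way the coefficient multiplying the distance $d$ is written: Theorem \ref{t.EkVP} uses $\epsic/\lambda$, whereas Theorem \ref{t.EkVPb} uses $\lambda'$. This comparison suggests the substitution $\lambda=\epsic/\lambda'$, which I expect to convert one form into the other without any further work.

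Concretely, given $\lambda'>0$, I would set $\lambda:=\epsic/\lambda'$ and apply Theorem \ref{t.EkVP} with this $\lambda$ and with the same $\epsic$ and $x_0$. Since $\epsic>0$ and $\lambda'>0$ force $\lambda>0$, the hypotheses of Theorem \ref{t.EkVP} are satisfied, so we obtain a point $z=z_{\epsic,\lambda}\in X$ fulfilling conditions (a), (b), and the conditional conclusion (c).

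It then remains only to rewrite these conclusions in terms of $\lambda'$. Replacing $\epsic/\lambda$ by $\lambda'$ transforms (a) and (b) verbatim into (a') and (b'). For the last assertion, under the additional hypothesis $f(x_0)\leq\inf f(X)+\epsic$, Theorem \ref{t.EkVP} yields $d(z,x_0)\leq\lambda=\epsic/\lambda'$, which is precisely the desired estimate (c').

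I do not anticipate any genuine obstacle: the entire content lies in recognizing the reparametrization, and the only point that must be checked — the positivity of $\lambda=\epsic/\lambda'$, needed so that Theorem \ref{t.EkVP} is applicable — is immediate from the positivity of $\epsic$ and $\lambda'$. The two versions of the principle are thus fully equivalent, and no argument independent of Theorem \ref{t.EkVP} is required.
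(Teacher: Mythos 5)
Your proposal is correct and is exactly the paper's own argument: the paper derives Theorem \ref{t.EkVPb} from Theorem \ref{t.EkVP} via precisely the substitution $\lambda=\epsic/\lambda'$ (equivalently $\lambda'=\epsic/\lambda$), as recorded in \eqref{EkVP-a-b}. Your check of positivity and the verbatim translation of (a), (b), (c) into (a$'$), (b$'$), (c$'$) is all that is required.
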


The equivalence of Theorems \ref{t.EkVP} and    \ref{t.EkVPb} follows by the substitution
\bequ\label{EkVP-a-b}
\lambda'=\frac\epsic\lambda \iff \lambda=\frac\epsic{\lambda'}\eequ.

  \subsection{The strong Ekeland variational principle}

  Let $X$ be a Banach space and $f:X\to\Real\cup\{\infty\}$ a function. A point $x_0\in \dom f$ is called

 \textbullet \;\;a \emph{minimum point} for $f$ if $f(x_0)\le f(x)$ for all $x\in X$;

\textbullet \;\;a \emph{strict minimum point} for $f$ if $f(x_0)<f(x)$ for all $x\in X\setminus\{x_0\}$;

\textbullet\;\;a \emph{strong minimum point} for if $f(x_0)=\inf f(X)$ and every sequence $(x_n)$ in $X$ such that
$\lim_nf(x_n)=\inf_X f$ is norm-convergent to $x_0$.

A sequence $(x_n)$ satisfying $\lim_nf(x_n)=\inf f(X)$ is called a \emph{minimizing sequence} for $f$.

\begin{remark}\label{re.str-min}
  A strong minimum point is a strict minimum point, but the converse is not true.
 \end{remark}

 Indeed, if there exist $z\ne z'$ such that $f(z)=m=f(z')$, where $m=\inf f(X),$ then the sequence $x_{2k-1}=z,\, x_{2k}=z', k\in \Nat,$ satisfies $\lim_nf(x_n)=m,$ but it is not convergent. Also,  the function $f:\Real \to\Real,\, f(x)=x^2e^{-x},$ has a strict minimum at 0, $f(0)=0,\, f(n)\to 0,$ but the sequence $(n)_{n\in\Nat}$ does not converge to 0.

 Condition (b$'$) in Theorem \ref{t.EkVPb}   asserts that, in fact, $z$ is  strict minimum   point for the perturbed function $\tilde f:=f+\lbda' d(z,\cdot)$. Georgiev \cite{pando86,pando88} proved a stronger variant of Ekeland variational principle, guaranteeing the existence of a strong minimum point $z$ for $\tilde f$.

 \begin{theo}[Strong Ekeland Variational Principle] \label{t.str-EkVP1} Let $(X,d)$ be a complete metric space and $f:X\to\Real\cup\{+\infty\}$ a lsc function bounded from below on $X$. Then for every $\gamma,\delta > 0$ and $x_0\in \dom f$ there exists $z\in X$ such that
\bequ\label{eq1.str-EkVP1}\begin{aligned}
 & {\rm (a)}\;\; f(z)+\gamma d(x_0,z)<f(x_0)+\delta;\\
  &{\rm (b)}\;\; f(z)<f(x)+\gamma d(z,x)\quad\mbox{for all}\quad x\in X\setminus\{z\}\,;\\
  &{\rm (c)}\;\;  f(x_n)+\gamma d(z,x_n)\to f(z)\;\Ra\; x_n\to z,\;\;\mbox{for every  sequence } (x_n)\mbx{  in }  X.
  \end{aligned} \eequ\end{theo}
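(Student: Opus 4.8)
The plan is to separate the classical content from the genuinely new one. Conditions (a) and (b) are exactly what the Ekeland principle delivers, and I would obtain them from the nested-sublevel-set construction underlying Theorem \ref{t.EkVPb}. Introduce the preorder $u\preceq v\iff f(u)+\gamma d(v,u)\le f(v)$, set $y_0=x_0$, and, having $y_n$, consider the closed nonempty section $F_n:=\{x: f(x)+\gamma d(y_n,x)\le f(y_n)\}$ and pick $y_{n+1}\in F_n$ with $f(y_{n+1})\le\inf_{F_n}f+\epsic_{n+1}$, where $\epsic_n\downarrow0$. The usual estimate gives $\diam F_n\le 2\epsic_n/\gamma\to0$, so the nested sets $F_0\supseteq F_1\supseteq\cdots$ intersect in a single point $z$, which is $\preceq$-minimal; minimality is precisely the strict inequality (b), and $z\preceq x_0$ gives $f(z)+\gamma d(x_0,z)\le f(x_0)<f(x_0)+\delta$, which is (a). Thus the whole difficulty is concentrated in arranging, for this same $z$, that it be a \emph{strong} minimum of $\tilde f:=f+\gamma\,d(z,\cdot)$, i.e.\ condition (c).

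I would first reformulate (c). Since $z$ is a strict minimizer of $\tilde f$ with $\inf_X\tilde f=\tilde f(z)=f(z)$, any minimizing sequence $(x_n)$ that is Cauchy must converge to $z$: its limit $w$ satisfies $\tilde f(w)\le\liminf_n\tilde f(x_n)=f(z)$ by lower semicontinuity, forcing $w=z$. Hence (c) is exactly the Tykhonov well-posedness of $\min\tilde f$, and it suffices to produce a $z$ for which $\inf\{\tilde f(x):d(z,x)\ge\rho\}>f(z)$ for every $\rho>0$. The natural attempt is to trap a minimizing sequence inside the sections: writing $a_k:=\tilde f(x_k)-f(z)\to0$ and using $f(y_n)\ge f(z)+\gamma d(y_n,z)$ (from $z\in F_n$) together with the triangle inequality, one gets $f(x_k)+\gamma d(y_n,x_k)\le f(y_n)+a_k$, so that $x_k$ lands in the \emph{enlarged} section $\{x:f(x)+\gamma d(y_n,x)\le f(y_n)+a_k\}$ for every $n$.

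The hard part, and the main obstacle, is that this trapping is not good enough: properties (a) and (b) say nothing about approximate minimizers, and the buffered sections above need not have vanishing diameter, because the buffer $a_k$ does not shrink when one passes from one recentring $y_n$ to the previous one — it simply propagates, so the lower bound $f(x)\ge\inf_{F_{n-1}}f$ that makes the unbuffered diameters small is lost. In a general, non-locally-compact complete space there is no compactness to recover convergence of far-away approximate minimizers, so well-posedness must be \emph{built into} the choice of $z$. Concretely, I would refine the selection of the $y_{n+1}$ so as to create, at a geometrically decreasing sequence of scales $\rho_n\downarrow 0$, a strictly positive value-gap between the minimum of the recentred perturbation and its infimum over the complementary region $\{d(y_n,\cdot)\ge\rho_n\}$, then balance the tolerances $\epsic_n$ against these gaps and close the argument by a diagonal selection $n=n(k)$. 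Making the buffered sublevel sets genuinely shrink to $\{z\}$ under such a balance is the delicate step where the arguments of Georgiev and Suzuki do their real work, and it is the part I expect to be most troublesome to transcribe faithfully to the quasi-pseudometric setting, where the asymmetry of $d$ forces one to keep careful track of which variable carries the distance in $F_n$, in $\tilde f$, and in the gap estimates.
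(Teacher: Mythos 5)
Your proposal secures only (a) and (b) --- the classical Ekeland content --- and leaves (c), which is the entire point of the ``strong'' principle, as an admitted sketch: the multi-scale plan (value-gaps at scales $\rho_n$, tolerances $\epsic_n$ balanced against them, a diagonal selection $n=n(k)$) is never carried out, and you yourself flag it as the troublesome step. This is a genuine gap, and moreover the route cannot be completed as proposed, because the freedom you intend to exploit (``refine the selection of the $y_{n+1}$'') can be entirely absent at the full constant $\gamma$. Concretely, let $X=\{p_0,p_1,p_2,\dots\}$ with $d(p_i,p_j)=1$ for $i\ne j$ (a complete, uniformly discrete metric space), let $f(p_0)=0$ and $f(p_n)=-\gamma+1/n$ for $n\ge 1$, and take $x_0=p_0$. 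Then $F_0=\{x:f(x)+\gamma d(p_0,x)\le f(p_0)\}=\{p_0\}$, since $f(p_n)+\gamma=1/n>0$; your nested construction is therefore forced to output $z=p_0$ for \emph{every} choice of tolerances, and $z=p_0$ does satisfy (a) and (b) but violates (c): $f(p_n)+\gamma d(p_0,p_n)=1/n\to 0=f(p_0)$ while $d(p_n,p_0)=1$. So no a posteriori argument from (a)--(b), and no selection refinement inside the constant-$\gamma$ order, can deliver (c); the construction itself has to change.

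The missing idea --- which is how the paper proceeds, following Turinici \cite{turin05}, and which is carried out in quasi-pseudometric form in the proof of Theorem \ref{t.sEk-qm} --- is to run the plain principle at a \emph{strictly smaller} constant and cash in the slack as a uniform linear gap. Pick $\lambda\in(0,1)$ with $\frac{\lambda}{1-\lambda}\bigl(f(x_0)-\inf f(X)\bigr)\le\delta$ and apply Theorem \ref{t.EkVPb} with $\lambda':=(1-\lambda)\gamma$. Condition (b$'$) gives $f(z)\le f(x)+(1-\lambda)\gamma\, d(z,x)$ for all $x\in X$, hence $\lambda\gamma\, d(z,x)\le f(x)+\gamma d(z,x)-f(z)$ for every $x$; so if $f(x_n)+\gamma d(z,x_n)\to f(z)$, then $d(z,x_n)\to 0$. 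That is (c) in one line, and it furnishes exactly your own sufficient criterion with the explicit gap $\lambda\gamma\rho$ on $\{x:d(z,x)\ge\rho\}$ --- no scales, no diagonalization, no compactness (which, as Suzuki's Theorem \ref{t.str-EkVP-comp} shows, is unavoidable if one insists on the sharp condition (i) in place of (a)). Condition (b) holds a fortiori since $\lambda'<\gamma$. The cost surfaces precisely in (a): condition (a$'$) yields only $f(z)+(1-\lambda)\gamma d(z,x_0)\le f(x_0)$, whence $\gamma d(z,x_0)\le f(x_0)-f(z)+\frac{\lambda}{1-\lambda}\bigl(f(x_0)-f(z)\bigr)\le f(x_0)-f(z)+\delta$, using $f(x_0)-f(z)\le f(x_0)-\inf f(X)$; taking the defining inequality for $\lambda$ strict (and noting $d(z,x_0)=0$ when $f(z)=f(x_0)$) upgrades this to the stated strict inequality. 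This is where the $+\delta$ in (a) comes from, i.e., the discrepancy between (a) and (a$'$) that the paper comments on. The paper's proof additionally localizes to the sublevel set $X_0=\{y:f(y)\le f(x_0)+\delta\}$ before applying the principle, though in the symmetric setting the estimate above already goes through globally because $\inf f(X_0)=\inf f(X)$; the asymmetry bookkeeping you worry about at the end is handled there by replacing the singleton $\{z\}$ with the sets $S_{\lambda'}(z)$.
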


  Georgiev, \emph{loc. cit.}, also showed the equivalence of this strong form of EkVP with  stronger forms of Danes' drop theorem, flower petal theorem, Phelps lemma, and others, extending so the results obtained by Penot \cite{penot86}. He gave a direct proof to the  strong drop theorem, the strong EkVP being a consequence of the equivalence mentioned above.  Later Turinici \cite{turin05} has shown that this strong form of EkVP can be deduced from Theorem \ref{t.EkVPb}.
  Let us mention that another proof was given by Deville, Godefroy and Zizler \cite{dev-god-ziz93} as a consequence of their generic smooth variational principle.

  Observe that there is a discrepancy between the conditions (a$'$) in Theorem \ref{t.EkVPb} and condition (a) in Theorem \ref{t.str-EkVP1}, condition (a$'$) being stronger than (a). As was remarked by Suzuki \cite{suzuki06,suzuki10}, a strong version of the Ekeland variational principle with condition (a$'$)
instead of (a) can be proved by imposing supplementary conditions on the underlying metric (or Banach) space $X$, which are, in some sense, also necessary.

Let  $f\colon X\to(-\infty,+\infty]$ be  a proper function defined on a metric space $(X,\rho)$. For $x_0\in \dom f$ and $\lambda>0$ consider an element  $z=z_{x_0,\lambda} $ satisfying  the following conditions:
\bequ\label{eq1.str-EkVP-Suz}\begin{aligned}
  &{\rm (i)}\;\; f(z)+\lbda \rho(z,x_0)\le f(x_0)\,;\\
  &{\rm (ii)}\;\; f(z)<f(x)+\lbda \rho(z,x)\quad\mbox{for all}\quad x\in X\setminus\{z\}\,;\\
  &{\rm (iii)}\;\;  f(x_n)+\lbda \rho(z,x_n)\to f(z)\;\Ra\; x_n\to z,\;\;\mbox{for every  sequence } (x_n)\mbx{  in }  X.
  \end{aligned}\eequ

  If $(X,\norm)$ is a normed space, then $\rho(x,y)$ is replaced by $\|y-x\|.$

  A metric space $(X,\rho)$ is called \emph{boundedly compact} if every bounded closed subset of $X$ is compact, or equivalently, if every bounded sequence in $X$ contains a convergent subsequence.
  \begin{remark}\label{re.bd-comp}
    It is obvious that a boundedly compact metric space is complete, and that a normed space is boundedly compact if and only if it is finite dimensional.
  \end{remark}

  \begin{theo}[\cite{suzuki06}]\label{T1.str-EkVP-comp} Let $(X,\rho)$ be a boundedly compact metric space, $f\colon X\to(-\infty,+\infty]$ a lsc bounded from below function,
  $x_0\in \dom f$ and $\lbda >0.$

  Then there exists a point $z\in X$ satisfying the conditions  \eqref{eq1.str-EkVP-Suz}.
   \end{theo}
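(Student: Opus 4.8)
The plan is to read off conditions (i) and (ii) from the classical Ekeland principle and then to \emph{upgrade} the strict minimum to a strong one by a compactness extraction; this last step is where the bounded compactness of $X$ is genuinely used. First, by Remark \ref{re.bd-comp} the space $(X,\rho)$ is complete, so Theorem \ref{t.EkVPb} applies. Taking $\lambda'=\lambda$ there, I obtain a point $z\in X$ satisfying (a$'$) and (b$'$); since $\rho$ is symmetric, these are exactly conditions (i) and (ii). It is then convenient to reformulate (ii): setting $S(z):=\{x\in X:\ f(x)+\lambda\rho(z,x)\le f(z)\}$, condition (ii) says precisely that $S(z)=\{z\}$, i.e. $z$ is the \emph{unique} point of its own slice.

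The remaining and essential task is (iii). Let $(x_n)$ be a sequence with $a_n:=f(x_n)+\lambda\rho(z,x_n)\to f(z)$. Since $f$ is bounded below, say $f\ge m_0$ on $X$, and $(a_n)$ is convergent hence bounded, the estimate $\lambda\rho(z,x_n)\le a_n-m_0$ shows that $\bigl(\rho(z,x_n)\bigr)_n$ is bounded; thus $(x_n)$ is a bounded sequence. By bounded compactness, every subsequence of $(x_n)$ admits a convergent further subsequence. If $x_{n_j}\to w$, then the lower semicontinuity of $f$ together with the continuity of $\rho(z,\cdot)$ yields
\[
 f(w)+\lambda\rho(z,w)\ \le\ \liminf_j\bigl(f(x_{n_j})+\lambda\rho(z,x_{n_j})\bigr)=\lim_j a_{n_j}=f(z),
\]
so $w\in S(z)=\{z\}$, that is $w=z$. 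Hence every convergent subsequence of $(x_n)$ has limit $z$.

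To finish, I would invoke the elementary fact that, in a boundedly compact space, a bounded sequence all of whose convergent subsequences share the same limit $z$ must itself converge to $z$: otherwise some subsequence would remain at distance $\ge\eps>0$ from $z$, yet by bounded compactness it would possess a convergent sub-subsequence, whose limit is both equal to $z$ and at distance $\ge\eps$ from $z$ — a contradiction. Therefore $x_n\to z$, which is exactly (iii).

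The main obstacle is this very last passage. Condition (ii) only guarantees that $z$ is a \emph{strict} minimizer of the perturbation $f+\lambda\rho(z,\cdot)$, and a strict minimizer need not be a strong one in general (Remark \ref{re.str-min}); bounded compactness is precisely what bridges this gap, by preventing a minimizing sequence of the perturbed function from escaping to infinity (it is kept bounded by the coercivity coming from $f\ge m_0$) and by forcing all its cluster points to coincide with $z$. This is consistent with Suzuki's observation that finite-dimensionality (equivalently, bounded compactness, see Remark \ref{re.bd-comp}) is the decisive hypothesis for obtaining the strong form with the sharp descent condition (i) rather than the weaker condition (a) of Theorem \ref{t.str-EkVP1}.
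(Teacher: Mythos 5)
Your proof is correct and is essentially the paper's own approach: Theorem \ref{T1.str-EkVP-comp} itself is only cited to Suzuki, but the paper's proof of its quasi-pseudometric generalization (Theorem \ref{T1.str-Ek-Suz}) follows exactly your scheme --- Ekeland's principle (Theorem \ref{t.EkVPb}, resp.\ \ref{t.Ek2b-qm}) for (i)--(ii), boundedness of the minimizing sequence from $f\ge m_0$, a compactness extraction, lower semicontinuity plus continuity of the distance to place the cluster point in the slice $S(z)$, and the sub-subsequence argument to get full convergence $x_n\to z$. The only cosmetic difference is that the paper runs the final step by contradiction, extracting a subsequence with $d(x_{n_k},z)\ge\gamma>0$ and deriving $f(z)<f(y)+\lambda d(y,z)\le f(z)$, rather than your ``all cluster points coincide with $z$'' formulation, which amounts to the same double extraction.
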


   \begin{remark}\label{re.q-cv} 1.  Let $X$ be a vector space. A function $f:X\to\Real\cup\{\infty\}$ is called \emph{quasi-convex} if
 $$
 f((1-t)x+ty)\le \max\{f(x),f(y)\}\,,$$
 for all $x,y\in X$ and $t\in[0,1].$ This is equivalent to the fact that the sublevel  sets $\{x\in X:f(x)\le\alpha\}$ are convex for all $\alpha\in\Real$  (see \cite{Niculescu}).

 2. One says that a Banach space $X$ is a \emph{dual Banach space} if there exists a Banach space $Y$ such that $Y^*=X.$  Obviously, a reflexive Banach space is a dual Banach space with $X=\left(X^*\right)^*$  and, in this case, the weak  (i.e. $\sigma(X^*,X^{**})$) and the weak$^*$ (i.e. $\sigma(X^*,X)$) topologies on $X$  agree.
 \end{remark}

In the Banach space case   the following results can be proved.

\begin{theo}[\cite{suzuki06}]\label{T1.str-EkVP-refl} Let $X$ be a   Banach space,   $f\colon X\to(-\infty,+\infty]$ a bounded from below function, $x_0\in \dom f$ and $\lbda >0.$
\ben
\item[\rm 1.] If $X$ is a dual Banach space and $f$ is  $w^*$-lsc,
  then there exists a point $z\in X$ satisfying   \eqref{eq1.str-EkVP-Suz}  with $x_n\xrightarrow{w^*}x$ in the condition (iii).
\item[\rm 2.]
Suppose that  the  Banach space $X$ is  reflexive. If  $f$ is  weakly  lsc,  then there exists a point $z\in X$ satisfying the conditions  \eqref{eq1.str-EkVP-Suz}.
The same is true if $f$ is quasi-convex and norm-lsc.
   \een\end{theo}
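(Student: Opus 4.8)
The plan is to produce the point $z$ from the ordinary Ekeland principle and then to read off condition (iii) essentially for free from the semicontinuity and compactness hypotheses. First I would observe that in both parts $f$ is norm-lsc: a $w^*$-lsc (resp. weakly lsc) function has $w^*$-closed (resp. weakly closed) sublevel sets $\{x:f(x)\le\alpha\}$, and since the $w^*$- and weak topologies are coarser than the norm topology these sets are a fortiori norm-closed. As a dual Banach space and a reflexive Banach space are complete, Theorem \ref{t.EkVPb} applies with $\lambda'=\lambda$ and yields $z\in X$ satisfying conditions (i) and (ii) of \eqref{eq1.str-EkVP-Suz}, since these are exactly Ekeland's (a$'$) and (b$'$). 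Writing $\tilde f(x):=f(x)+\lambda\|x-z\|$, condition (ii) says precisely that $z$ is the unique minimizer of $\tilde f$ and that $\inf_X\tilde f=\tilde f(z)=f(z)$. Thus the entire content of the theorem reduces to verifying the strong-minimum condition (iii), namely that every minimizing sequence of $\tilde f$ converges to $z$, in the $w^*$-topology for Part 1 and in norm for Part 2.

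The common preliminary step is boundedness: if $f(x_n)+\lambda\|x_n-z\|\to f(z)$ and $f\ge M$ on $X$, then $\lambda\|x_n-z\|\le f(x_n)+\lambda\|x_n-z\|-M$ stays bounded, so $(x_n)$ is bounded. Next I would record that $\tilde f$ inherits the relevant lower semicontinuity: $f$ is $w^*$-lsc (resp. weakly lsc) by hypothesis, while $x\mapsto\|x-z\|$ is $w^*$-lsc on a dual space, being a supremum of the $w^*$-continuous affine functionals $x\mapsto\langle x-z,y\rangle$ over $y\in B_Y$, and is weakly lsc in general, being convex and norm-continuous; hence $\tilde f$ is $w^*$-lsc (resp. weakly lsc). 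The key consequence, used in both parts, is that any $w^*$- (resp. weak) cluster point $w$ of a minimizing sequence satisfies $\tilde f(w)\le\liminf\tilde f(x_n)=f(z)=\inf_X\tilde f$, so $\tilde f(w)=f(z)$, and the strict minimality (ii) forces $w=z$: the sequence admits $z$ as its only cluster point.

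For Part 1 this already closes the argument once combined with the Banach--Alaoglu theorem: the bounded sequence lies in a $w^*$-compact ball $B$, and if it failed to $w^*$-converge to $z$ there would be a $w^*$-open set $U\ni z$ and a subsequence contained in the $w^*$-compact set $B\setminus U$, which would then possess a $w^*$-cluster point in $B\setminus U$; this is impossible since every cluster point must equal $z\in U$. Hence $x_n\xrightarrow{w^*}z$, which is condition (iii) in its $w^*$-form, and no separability of $Y$ is needed.

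For Part 2 the delicate point, and the step I expect to be the main obstacle, is upgrading the weak convergence obtained above to the norm convergence that (iii) actually demands. Here I would argue by contradiction: if $\|x_{n_k}-z\|\ge\delta>0$ along a subsequence, then by reflexivity and the Eberlein--\v Smulian theorem I pass to a weakly convergent further subsequence, necessarily to $z$, after also arranging $\|x_{n_{k_j}}-z\|\to b\ge\delta$. Then $f(x_{n_{k_j}})=\tilde f(x_{n_{k_j}})-\lambda\|x_{n_{k_j}}-z\|\to f(z)-\lambda b$, while weak lower semicontinuity gives $f(z)=f(w)\le\liminf f(x_{n_{k_j}})=f(z)-\lambda b$, whence $b\le 0$, a contradiction; therefore $x_n\to z$ in norm. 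Finally I would reduce the quasi-convex norm-lsc case to the weakly lsc one: if $f$ is quasi-convex its sublevel sets are convex by Remark \ref{re.q-cv}, and being norm-closed by norm lower semicontinuity they are weakly closed by Mazur's theorem, so $f$ is weakly lsc and the preceding argument applies, recalling that in a reflexive space the weak and weak$^*$ topologies coincide.
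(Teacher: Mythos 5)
Your proposal is correct, but note first that the paper contains no proof of Theorem \ref{T1.str-EkVP-refl} to compare against: the theorem is quoted from Suzuki \cite{suzuki06} as a known result, stated without argument. Judged on its own merits, your reconstruction is sound and is essentially the natural (and, in substance, Suzuki's) route: get $z$ from the classical principle (Theorem \ref{t.EkVPb} with $\lambda'=\lambda$, legitimate since $w^*$-lsc or weak lsc implies norm lsc and $X$ is complete), observe that (i) and (ii) of \eqref{eq1.str-EkVP-Suz} are exactly Ekeland's conclusions so that $z$ is the unique minimizer of $\tilde f=f+\lambda\|\cdot-z\|$, and then extract (iii) from compactness. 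The delicate points all check out: the dual norm $x\mapsto\|x-z\|$ is $w^*$-lsc as a supremum of the $w^*$-continuous functionals $\langle\cdot-z,y\rangle$, $y\in B_Y$; your cluster-point formulation correctly sidesteps any metrizability or separability of the $w^*$-topology (a sequence in a $w^*$-compact set always has a cluster point, and since $\tilde f(x_n)\to f(z)$ as a full sequence, every cluster point $w$ of every subsequence lies in each $w^*$-closed sublevel set $\{\tilde f\le f(z)+\varepsilon\}$, forcing $\tilde f(w)=f(z)$ and hence $w=z$ by the strict minimality in (ii)); and in Part 2 the passage from weak to norm convergence is handled correctly, the contradiction $f(z)\le f(z)-\lambda b$ exploiting the weak lsc of $f$ itself rather than of $\tilde f$ — note this step implicitly uses $f(z)<\infty$, which is guaranteed by (i) since $f(z)\le f(x_0)$. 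The reduction of the quasi-convex norm-lsc case to the weakly lsc case via convexity of sublevel sets and Mazur's theorem is likewise the standard, correct step. In short: nothing is missing, and your argument could serve as a self-contained proof of the cited theorem.
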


    As it was shown by Suzuki \cite{suzuki10}, in some sense, the results from Theorems \ref{T1.str-EkVP-comp} and \ref{T1.str-EkVP-refl}  are the best that can be expected.

 \begin{theo}\label{t.str-EkVP-comp}
 For a   metric space $(X,\rho)$ the following are equivalent.
 \begin{enumerate}
   \item[{\rm 1.}] The metric space $X$ is boundedly compact.
   \item[{\rm 2.}] For every  proper  lsc bounded from below function $f\colon X\to(-\infty,+\infty]$,     $x_0\in \dom f$ and $\lbda >0$
there exists a point $z\in X$ satisfying the conditions   \eqref{eq1.str-EkVP-Suz}.
 \item[{\rm 3.}] For every  Lipschitz function $f\colon X\to[0,+\infty)$, \,    $x_0\in \dom f$ and $\lbda >0$
there exists a point $z\in X$ satisfying the conditions  \eqref{eq1.str-EkVP-Suz}.
 \end{enumerate}
 \end{theo}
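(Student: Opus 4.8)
The plan is to prove the cycle of implications $1\Rightarrow2\Rightarrow3\Rightarrow1$, which gives the stated equivalence. The first two are essentially free. Implication $1\Rightarrow2$ is exactly Theorem \ref{T1.str-EkVP-comp}: if $(X,\rho)$ is boundedly compact, then every proper lsc bounded below $f$, every $x_0\in\dom f$ and every $\lambda>0$ admit a $z$ satisfying \eqref{eq1.str-EkVP-Suz}. Implication $2\Rightarrow3$ is immediate, since a Lipschitz function $f\colon X\to[0,+\infty)$ is continuous (hence lsc), everywhere finite (hence proper, with $\dom f=X$) and bounded below by $0$, so it belongs to the class treated in statement 2.

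The substance is $3\Rightarrow1$, which I would prove by contraposition: assuming $X$ is \emph{not} boundedly compact, I would exhibit one Lipschitz $f\colon X\to[0,+\infty)$, a base point $x_0$ and a $\lambda>0$ for which no $z$ satisfies all of (i)--(iii). Non-bounded-compactness gives a bounded sequence $(a_n)$ with no convergent subsequence, and I would split according to whether its range is totally bounded. If it is, a diagonal argument yields a Cauchy subsequence $(a_{n_k})$, which cannot converge (else the subsequence would), so $X$ is incomplete along it. Here I would take $f(x):=\lambda\lim_k\rho(x,a_{n_k})$, the limit existing because $(a_{n_k})$ is Cauchy; this $f$ is $\lambda$-Lipschitz, nonnegative, with $\inf f=0$ approached along $(a_{n_k})$ but never attained. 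For \emph{every} $z\in X$ one checks, using the triangle inequality in the limit, that $\inf(f+\lambda\rho(z,\cdot))=f(z)$ and that $f(a_{n_k})+\lambda\rho(z,a_{n_k})\to f(z)$, while $(a_{n_k})$ does not converge to $z$. Thus (iii) fails for every $z$, and a fortiori statement 3 fails; in particular this already forces $X$ to be complete.

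If instead the range of $(a_n)$ is not totally bounded, then for some $\eps>0$ there is an $\eps$-separated subsequence, which I relabel $(a_n)$, so that $\rho(a_m,a_n)\ge\eps$ for $m\ne n$, the sequence is bounded, and it has no convergent subsequence. The aim is the same shape of counterexample: a $\lambda$-Lipschitz $f\ge0$ and a base point $x_0=z$ for which the perturbation $g_z:=f+\lambda\rho(z,\cdot)$ has a \emph{strict} minimum at $z$ (so (i) and (ii) hold there) while $g_z(a_n)\to\inf g_z=f(z)$, forcing (iii) to fail, and — more delicately — such that no competing point rescues the principle. I expect this separated case to be the main obstacle. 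The naive candidate, the lower envelope of cones $f(x)=\lambda\inf_n(\rho(x,a_n)+c_n)$ with $c_n\downarrow0$, does \emph{not} work: it develops a conical well at each $a_m$ (locally $f(x)=\lambda(c_m+\rho(x,a_m))$), so each $a_m$ is a genuine strong minimum of its perturbation and the strong Ekeland conclusion actually holds.

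Consequently the correct construction must suppress all such wells. Using that for a $\lambda$-Lipschitz $f$ every point is automatically a (non-strict) minimum of $g_z$, with (i) equivalent to maximal-rate descent from $x_0$ to $z$ and (ii)/(iii) to strictness/strongness of that minimum, the requirement is to arrange that $f$ decreases at \emph{nearly} the maximal rate $\lambda$ from the base point toward the infinitely many separated points $a_n$ — so a non-convergent minimizing sequence for $g_z$ survives — while \emph{no} exact maximal-descent point exists, keeping the minimum strict. Realising such an $f$, $x_0$ and $\lambda$ in a general, possibly non-convex and even totally disconnected, complete metric space, where one controls only the mutual distances of the $a_n$, is the technical heart of the argument; once they are in hand, the verification that (i)--(iii) cannot hold simultaneously is a routine computation.
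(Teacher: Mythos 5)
First, a point of reference: the paper does not prove this theorem at all --- it is quoted from Suzuki \cite{suzuki10} (``As it was shown by Suzuki\dots''), so the benchmark is Suzuki's own argument, whose skeleton your cycle $1\Rightarrow 2\Rightarrow 3\Rightarrow 1$ matches. Implications $1\Rightarrow 2$ (which is exactly Theorem \ref{T1.str-EkVP-comp}) and $2\Rightarrow 3$ are fine as you state them, and your treatment of the incomplete case of $3\Rightarrow 1$ is correct and complete: for a Cauchy subsequence $(a_{n_k})$ with no limit, $f:=\lambda\lim_k\rho(\cdot,a_{n_k})$ is $\lambda$-Lipschitz and nonnegative, so $f(z)=\inf\bigl(f+\lambda\rho(z,\cdot)\bigr)$ for every $z$, and $(a_{n_k})$ is a minimizing sequence for \emph{every} perturbation $f+\lambda\rho(z,\cdot)$ while converging to no point of $X$; hence condition (iii) of \eqref{eq1.str-EkVP-Suz} fails at every $z$, and statement 3 fails. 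This correctly establishes that statement 3 forces completeness.

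The genuine gap is the $\varepsilon$-separated case, and you flag it yourself. Your negative analysis of the cone envelope $f=\lambda\inf_n\bigl(\rho(\cdot,a_n)+c_n\bigr)$ is sound --- when, say, $\lambda\varepsilon>\lambda c_1$, each $a_m$ is a strong minimum of $f+\lambda\rho(a_m,\cdot)$, and an $a_m$ near-realizing the infimum defining $f(x_0)$ tends to satisfy (i)--(iii) --- but you then replace the required construction by a list of desiderata (``suppress all such wells'', descent at nearly but not exactly the maximal rate $\lambda$) without producing an $f$, an $x_0$ and a $\lambda$, and without verifying that \emph{every} $z$ satisfying (i) must violate (ii) or (iii). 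This is not a routine detail to defer: it is the entire content of $3\Rightarrow 1$ beyond completeness, since completeness is strictly weaker than bounded compactness (an infinite set with the discrete metric is complete and bounded but not boundedly compact, and the separated case is precisely what must handle it; there, for instance, $f$ with values $f(a_n)=\lambda/n$ and $x_0=a_1$ already defeats the principle at the only point $z=x_0$ admitted by (i), which hints at the kind of value prescription along $(a_n)$, followed by a $\lambda$-Lipschitz extension, that the general case requires --- together with an extraction making the mutual distances $\rho(a_n,a_m)$ behave). Note also that the difficulty is two-sided: even if you arrange a non-convergent minimizing sequence for the perturbation at $x_0$, you must still rule out competitor points $z\neq x_0$ satisfying the exact-rate descent condition (i) at which the perturbed function could have a strong minimum, and your proposal offers no mechanism for this. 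In Suzuki's paper this case is settled by an explicit Lipschitz construction; as written, your argument proves only ``strong EkVP $\Rightarrow$ completeness'', so the implication $3\Rightarrow 1$, and with it the theorem, remains unproved.
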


 A similar result holds in the case of normed spaces.

 \begin{theo}\label{t.str-EkVP-refl}
 For a   normed  space $(X,\norm)$ the following are equivalent.
 \begin{enumerate}
   \item[{\rm 1.}]  $X$ is a reflexive Banach space.
   \item[{\rm 2.}] For every  proper  lsc bounded from below quasi-convex function $f\colon X\to(-\infty,+\infty]$,     $x_0\in \dom f$ and $\lbda >0$
there exists a point $z\in X$ satisfying the conditions   \eqref{eq1.str-EkVP-Suz}.
 \item[{\rm 3.}] For every  Lipschitz convex function $f\colon X\to[0,+\infty)$, \,    $x_0\in \dom f$ and $\lbda >0$
there exists a point $z\in X$ satisfying the conditions     \eqref{eq1.str-EkVP-Suz}.
 \end{enumerate}
 \end{theo}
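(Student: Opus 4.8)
The plan is to establish the cycle of implications $1\Rightarrow2\Rightarrow3\Rightarrow1$. The first two are soft. For $1\Rightarrow2$ I would simply invoke Theorem \ref{T1.str-EkVP-refl}(2): a proper, norm-lsc, bounded below \emph{quasi-convex} function on a reflexive space already satisfies the strong Ekeland conditions \eqref{eq1.str-EkVP-Suz}, which is exactly assertion 2. For $2\Rightarrow3$ I would observe that every Lipschitz convex $f\colon X\to[0,+\infty)$ is proper, continuous (hence lsc), bounded below by $0$, and convex, hence quasi-convex (cf. Remark \ref{re.q-cv}); so it falls under the hypotheses of 2, and 3 is a mere special case.

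The substance is the converse $3\Rightarrow1$, which I would split into completeness and reflexivity. For completeness, given a Cauchy sequence $(u_n)$ I would set $f(x)=\lim_n\|x-u_n\|$; the limit exists because $\bigl(\|x-u_n\|\bigr)_n$ is Cauchy in $\Real$, and $f$ is $1$-Lipschitz, convex and nonnegative, hence admissible for 3. Applying 3 with $\lambda=1$ and an arbitrary $x_0$ produces $z$ satisfying \eqref{eq1.str-EkVP-Suz}; since $f(u_n)=\lim_m\|u_n-u_m\|\to0$ and $\|z-u_n\|\to f(z)$ by the very definition of $f$, one has $f(u_n)+\|z-u_n\|\to f(z)$, so condition (iii) forces $u_n\to z$. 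Thus $X$ is a Banach space.

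For reflexivity I would reduce, via the theorem of R.\,C.\ James, to showing that every $x^*\in X^*$ with $\|x^*\|=1$ attains its norm, i.e. $x^*(x)=\|x\|$ for some $x\neq0$. Arguing by contradiction, I assume non-attainment, so that $x^*(x)<\|x\|$ for every $x\neq0$, and fix unit vectors $y_n$ with $x^*(y_n)\to1$. I would then test 3 on the Lipschitz convex nonnegative function $f(x)=\max\{0,\,M-x^*(x)\}$, with $M\ge1$, at $x_0=0$ and $\lambda=1$. Condition (i) reads $f(z)+\|z\|\le f(0)=M$, while always $f(z)+\|z\|\ge \bigl(M-x^*(z)\bigr)+\|z\|\ge M$; hence equality holds, forcing $x^*(z)=\|z\|$ and thus, by non-attainment, $z=0$. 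Finally $f(y_n)+\|0-y_n\|=\max\{0,M-x^*(y_n)\}+1\to(M-1)+1=M=f(0)=f(z)$, so (iii) would give $y_n\to z=0$, contradicting $\|y_n\|=1$. Therefore every $x^*$ attains its norm and $X$ is reflexive.

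The main obstacle is precisely this last construction. Because the perturbation $+\lambda\|\cdot-z\|$ makes the perturbed function coercive, a naive candidate always possesses a bounded strong minimum, and the completeness trick is unavailable since a non-Cauchy minimizing sequence offers no genuine limit. The decisive idea is to choose $f$ decreasing at rate exactly $\lambda=1$ in the near-attaining directions $y_n$, so that the gain of $f$ along $(y_n)$ cancels the cost of the perturbation; this converts $(y_n)$ into a minimizing sequence of $f+\|\cdot-z\|$ that refuses to converge. Pinning $z=0$ down through condition (i) is what makes the cancellation exact and the contradiction go through.
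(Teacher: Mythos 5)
Your proposal is correct, but note first that the paper itself offers no proof of this statement: Theorem \ref{t.str-EkVP-refl} is quoted as background from Suzuki \cite{suzuki10} (``As it was shown by Suzuki\dots''), so there is no in-paper argument to compare against; what you have produced is a reconstruction of Suzuki's theorem, and it follows the known route. The implications $1\Rightarrow 2\Rightarrow 3$ are indeed soft: $1\Rightarrow 2$ is exactly the second assertion of Theorem \ref{T1.str-EkVP-refl} (quasi-convex plus norm-lsc suffices in a reflexive space), and $2\Rightarrow 3$ only needs that a Lipschitz convex $f\ge 0$ is proper, lsc, bounded below and quasi-convex (Remark \ref{re.q-cv}). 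Your two constructions in $3\Rightarrow 1$ both check out. For completeness: $f(x)=\lim_n\|x-u_n\|$ is well defined since $\bigl(\|x-u_n\|\bigr)_n$ is Cauchy in $\Real$, it is $1$-Lipschitz, convex and nonnegative as a pointwise limit of such functions, and since $f(u_n)\to 0$ while $\|u_n-z\|\to f(z)$, condition (iii) of \eqref{eq1.str-EkVP-Suz} forces $u_n\to z$. For reflexivity: with $f(x)=\max\{0,\,M-x^*(x)\}$, $x_0=0$, $\lambda=1$, the chain $M\le \bigl(M-x^*(z)\bigr)+\|z\|\le f(z)+\|z\|\le f(0)=M$ (using $x^*(z)\le\|z\|$ for $\|x^*\|=1$) pins down $x^*(z)=\|z\|$, hence $z=0$ under the non-attainment hypothesis and $f(z)=M$; then $f(y_n)+\|y_n-z\|=\bigl(M-x^*(y_n)\bigr)+1\to M=f(z)$ exhibits a non-convergent minimizing sequence, contradicting (iii). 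Two points deserve emphasis, and you handle both correctly: the appeal to James' theorem is legitimate only for Banach spaces, which is why establishing completeness first is not optional; and the reliance on James' theorem is essentially unavoidable here --- Suzuki's own proof rests on it as well, with a construction of the same kind (a Lipschitz convex function whose slope in the almost-norming directions exactly cancels the perturbation $\lambda\|\cdot-z\|$, which is the cancellation you describe in your closing remarks).
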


   \section{The case of quasi-pseudometric  spaces}

   We present in this section some versions of Ekeland and strong Ekeland principles in quasi-pseudometric  spaces.
   \subsection{Quasi-pseudometric spaces}\label{Ss.qpm}

    A {\it quasi-pseudometric} on an arbitrary set $X$ is a mapping $d: X\times X\to
[0,\infty)$ satisfying the following conditions:
\begin{align*}%
\mbox{(QM1)}&\qquad d(x,y)\geq 0, \quad and  \quad d(x,x)=0;\\
\mbox{(QM2)}&\qquad d(x,z)\leq d(x,y)+d(y,z),  %
\end{align*}%
for all $x,y,z\in X.$ If further
$$%
\mbox{(QM3)}\qquad d(x,y)=d(y,x)=0\Rightarrow x=y,
$$%
for all $x,y\in X,$ then $d$ is called a {\it quasi-metric}. The pair $(X,d)$ is called a {\it quasi-pseudometric space}, respectively  a {\it quasi-metric space}\footnote{In \cite{Cobzas} the term ``quasi-semimetric" is used instead of ``quasi-pseudometric", while in \cite{Larrecq} it is called hemi-metric.} The conjugate of the quasi-pseudometric
$d$ is the quasi-pseudometric $\bar d(x,y)=d(y,x),\, x,y\in X.$ The mapping $
d^s(x,y)=\max\{d(x,y),\bar d(x,y)\},\,$ $ x,y\in X,$ is a pseudometric on $X$ which is a metric if and
only if $d$ is a quasi-metric.

If $(X,d)$ is a quasi-pseudometric space, then for $x\in X$ and $r>0$ we define the balls in $X$ by the formulae %
\begin{align*}%
B_d(x,r)=&\{y\in X : d(x,y)<r\} \; \mbox{-\; the open ball, and }\\ %
B_d[x,r]=&\{y\in X : d(x,y)\leq r\} \; \mbox{-\; the closed ball. } %
\end{align*} %

\textbf{Topological properties}

 The topology $\tau_d$ (or $\tau(d)$) of a quasi-pseudometric space $(X,d)$ can be defined starting from the family
$\rond{V}_d(x)$ of neighborhoods  of an arbitrary  point $x\in X$:%
\bequs
\begin{aligned}
V\in \rond{V}_d(x)\;&\iff \; \exists r>0\;\mbox{such that}\; B_d(x,r)\subseteq V\\
                             &\iff \; \exists r'>0\;\mbox{such that}\; B_d[x,r']\subseteq V. %
\end{aligned} %
\eequs

The convergence of a sequence $(x_n)$ to $x$ with respect to $\tau_d,$ called $d$-convergence and
denoted by
$x_n\xrightarrow{d}x,$ can be characterized in the following way %
\bequ\label{char-rho-conv1} %
         x_n\xrightarrow{d}x\;\iff\; d(x,x_n)\to 0. %
\eequ %

Also
\bequ\label{char-rho-conv2} %
         x_n\xrightarrow{\bar d}x\;\iff\;\bar d(x,x_n)\to 0\; \iff\; d(x_n,x)\to 0\,,
\eequ
and
\bequ\label{char-rho-conv3} \begin{aligned}
 x_n\xrightarrow{d^s}x&\iff\; d^s(x,x_n)\to 0\\&\iff  d(x,x_n)\to 0 \;\mbx{ and }\; d(x_n,x)\to 0\\ &\iff x_n\xrightarrow{d}x\;\mbx{ and }\;  x_n\xrightarrow{\bar d}x\; ,
\end{aligned}\eequ

As a space equipped with two topologies, $\tau_d$ and   $\tau_{\bar d}\,$, a quasi-pseudometric space can be viewed as a bitopological space in the sense of Kelly \cite{kelly63}. In fact, this is the main example of such a space considered in \cite{kelly63} and, later on, the quasi-uniform spaces were considered as well.

The following   topological properties are true for  quasi-pseudometric spaces.
    \begin{prop}[see \cite{Cobzas}]\label{p.top-qsm1}
   If $(X,d)$ is a quasi-pseudometric space, then the following hold.
   \begin{enumerate}
   \item[\rm 1.] The ball $B_d(x,r)$ is $\tau_d$-open and  the ball $B_d[x,r]$ is
       $\tau_{\bar{d}}$-closed. The ball    $B_d[x,r]$ need not be $\tau_d$-closed.
     \item[\rm 2.]
   The topology $\tau_d$ is $T_0$ if and only if  $d $ is a quasi-metric.   \\ The topology $\tau_d$ is $T_1$ if and only if
   $d(x,y)>0$ for all  $x\neq y$  in $X$.
      \item [\rm 3.]  For every fixed $x\in X,$ the mapping $d(x,\cdot):X\to (\Real,|\cdot|)$ is
   $\tau_d$-usc and $\tau_{\bar d}$-lsc. \\
   For every fixed $y\in X,$ the mapping $d(\cdot,y):X\to (\Real,|\cdot|)$ is $\tau_d$-lsc and
   $\tau_{\bar d}$-usc.

  \end{enumerate}%
     \end{prop}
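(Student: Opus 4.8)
The plan is to handle the three parts independently; each reduces to the triangle inequality (QM2) together with the convergence characterizations \eqref{char-rho-conv1} and \eqref{char-rho-conv2}. For part 1, to see that $B_d(x,r)$ is $\tau_d$-open I would take $y\in B_d(x,r)$, set $s:=r-d(x,y)>0$, and verify $B_d(y,s)\subseteq B_d(x,r)$: if $d(y,z)<s$, then (QM2) gives $d(x,z)\le d(x,y)+d(y,z)<r$. For the $\tau_{\bar d}$-closedness of $B_d[x,r]$ I would show its complement is $\tau_{\bar d}$-open: if $d(x,y)>r$, put $s:=d(x,y)-r>0$; for $z$ with $\bar d(y,z)=d(z,y)<s$ the inequality $d(x,y)\le d(x,z)+d(z,y)$ forces $d(x,z)>r$, so $B_{\bar d}(y,s)$ avoids $B_d[x,r]$. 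For the last assertion I would exhibit the standard asymmetric line $X=\Real$ with $d(x,y)=\max\{y-x,0\}$, whose $\tau_d$-open sets are exactly the left rays $(-\infty,a)$; there $B_d[0,r]=(-\infty,r]$ has complement $(r,\infty)$, which is not such a ray, hence $B_d[0,r]$ is not $\tau_d$-closed, although its complement $(r,\infty)$ is a $\tau_{\bar d}$-open right ray, so it is $\tau_{\bar d}$-closed, consistent with the first assertion.

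For part 2, both equivalences come from reading off which points fall into small balls. If $d$ is a quasi-metric and $x\ne y$, then by (QM3) at least one of $d(x,y),d(y,x)$ is positive, say $d(x,y)=:r>0$; then $B_d(x,r)$ contains $x$ but not $y$, giving $T_0$. Conversely, if (QM3) fails there are $x\ne y$ with $d(x,y)=d(y,x)=0$, so every $\tau_d$-neighborhood of either point contains the other, contradicting $T_0$. The $T_1$ statement is the symmetric version of the same computation: $d(x,y)>0$ for all $x\ne y$ lets $B_d(x,d(x,y))$ separate $x$ from $y$ (and likewise with the roles interchanged), whereas $T_1$ applied to a neighborhood of $x$ missing $y$ yields $d(x,y)\ge r>0$.

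For part 3, I would argue sequentially via \eqref{char-rho-conv1} and \eqref{char-rho-conv2}. Fix $x$ and suppose $y_n\xrightarrow{d}y_0$, i.e.\ $d(y_0,y_n)\to 0$; then $d(x,y_n)\le d(x,y_0)+d(y_0,y_n)$ gives $\limsup_n d(x,y_n)\le d(x,y_0)$, so $d(x,\cdot)$ is $\tau_d$-usc. If instead $y_n\xrightarrow{\bar d}y_0$, i.e.\ $d(y_n,y_0)\to 0$, then $d(x,y_0)\le d(x,y_n)+d(y_n,y_0)$ gives $\liminf_n d(x,y_n)\ge d(x,y_0)$, so $d(x,\cdot)$ is $\tau_{\bar d}$-lsc. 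The two statements for $d(\cdot,y)$ follow by the identical argument applied in the first variable, equivalently by passing to the conjugate $\bar d$.

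The computations are all one-line applications of (QM2), so there is no genuine analytic difficulty; the only step requiring real care is the counterexample in part 1, where one must not merely check that $B_d[x,r]$ fails to be $\tau_d$-closed in the chosen example but also confirm that this is compatible with its being $\tau_{\bar d}$-closed, which amounts to correctly identifying the $\tau_d$- and $\tau_{\bar d}$-open sets of the asymmetric line. A secondary point to keep straight throughout part 3 is the asymmetry of the convergence notions — whether $d(x_0,x_n)$ or $d(x_n,x_0)$ tends to $0$ — since interchanging them swaps usc with lsc and $\tau_d$ with $\tau_{\bar d}$.
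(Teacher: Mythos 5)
Your proof is correct and is the standard argument; the paper itself gives no proof of Proposition \ref{p.top-qsm1}, simply citing \cite{Cobzas}, where essentially these computations (including the asymmetric line $d(x,y)=\max\{y-x,0\}$ as the counterexample in part 1) appear. The only point worth flagging is that in part 3 you establish \emph{sequential} semicontinuity, which yields topological semicontinuity only because $\tau_d$ and $\tau_{\bar d}$ are first countable (the balls $B_d(x,1/n)$, resp.\ $B_{\bar d}(x,1/n)$, form countable neighborhood bases) --- you should say this explicitly, or argue directly from part 1, which is cleaner: $\{y\in X: d(x,y)<\alpha\}=B_d(x,\alpha)$ is $\tau_d$-open, giving $\tau_d$-usc, and $\{y\in X: d(x,y)\le\alpha\}=B_d[x,\alpha]$ is $\tau_{\bar d}$-closed, giving $\tau_{\bar d}$-lsc, with the statements for $d(\cdot,y)$ obtained by passing to $\bar d$.
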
 %

The following remarks show that imposing too many conditions on a quasi-pseudometric space it becomes  pseudometrizable.

\begin{remark}[\cite{kelly63}] Let $(X,d)$ be  a quasi-metric space. Then
\ben
  \item[\rm (a)]\  if  the mapping $d(x,\cdot):X\to (\Real,|\cdot|)$ is $\tau_d$-continuous for   every $x\in X,$ then the topology $\tau_d$ is regular;
   \item[\rm (b)] if $\tau_d\subseteq \tau_{\bar{d}}$, then the topology $\tau_{\bar{d}}$ is pseudometrizable;
 \item[\rm (c)] if $d(x,\cdot):X\to (\Real,|\cdot|)$ is $\tau_{\bar{d}}$-continuous for every $x\in X,$ then the topology $\tau_{\bar{d}}$ is pseudometrizable.
\een\end{remark}

\begin{remark}
  The characterization of Hausdorff property (or $T_2$) of quasi-metric spaces can be given in terms of uniqueness of the limits, as in the metric case. The topology of a quasi-pseudometric space $(X,d)$ is Hasudorff if and only if every sequence in $X$ has at most one $d$-limit if and only if every sequence in $X$ has at most one $\bar d$-limit (see \cite{wilson31}).

   In the case of an asymmetric normed space   there exists a characterization in terms of the quasi-norm (see \cite{Cobzas}, Proposition 1.1.40).
  \end{remark}

  Recall that a topological space $(X,\tau)$  is called:

 \begin{itemize}
 \item  $T_0$ if for every pair of distinct points  in $X$, at least one of them has a neighborhood   not containing the other;
   \item  $T_1$ if   for every pair of distinct points in $X$,  each of them has a neighborhood   not containing the other;
\item    $T_2$ (or {\it Hausdorff}) if  every  two distinct points  in $X$ admit  disjoint  neighborhoods;
\item  {\it regular}  if for every   point $x\in X$ and closed set $A$ not containing $x$ there exist the disjoint open sets $U,V$ such that $x\in U$  and $A\subseteq V.$  \end{itemize}

  \textbf{Completeness in quasi-pseudometric spaces}

   The lack of symmetry in the definition of quasi-metric spaces causes a lot of troubles, mainly concerning
   completeness, compactness and total boundedness in such spaces.
   There are a lot of completeness notions in quasi-metric spaces, all agreeing with the usual notion of
   completeness in the metric case,  each of
   them having its advantages and weaknesses (see \cite{reily-subram82}, or \cite{Cobzas}).

   As in what follows we shall work only with two of these  notions, we shall present only them,
   referring to \cite{Cobzas} for   others.

   We use the  notation
   \begin{align*}
  &\Nat=\{1,2,\dots\}  \mbx{ --  the set of natural numbers,}\\
&\Nat_0=\Nat\cup\{0\} \mbx{ --  the set of non-negative integers.}
  \end{align*}

  \begin{defi}\label{def.rKC}
    Let $(X,d)$ be a quasi-pseudometric space. A sequence $(x_n)$ in $(X,d)$ is called:
\bite
\item\;  {\it left $d$-$K$-Cauchy} if  for every $\epsic >0$
   there exists $n_\epsic\in \Nat$ such that
\bequ\label{def.l-Cauchy}\begin{aligned} %
 &\forall n,m, \;\;\mbox{with}\;\; n_\epsic\leq n < m ,\quad d(x_n,x_m)<\epsic\\ %
 \iff &\forall n \geq n_\epsic,\; \forall k\in\Nat,\quad d(x_n,x_{n+k})<\epsic; %
 \end{aligned}\eequ
\item\;   {\it right $d$-$K$-Cauchy} if  for every $\epsic >0$
   there exists $n_\epsic\in \Nat$ such that
\bequ\label{def.r-Cauchy}\begin{aligned} %
 &\forall n,m, \;\;\mbox{with}\;\; n_\epsic\leq n < m ,\quad d(x_m,x_n)<\epsic\\ %
 \iff &\forall n \geq n_\epsic,\; \forall k\in\Nat,\quad d(x_{n+k},x_n)<\epsic. %
 \end{aligned}\eequ\eite
  \end{defi}

The quasi-pseudometric space $(X,d)$ is called:
\bite
\item\;  {\it sequentially left $d$-$K$-complete} if every left $d$-$K$-Cauchy sequence is
$d$-convergent;
\item\;  {\it sequentially  right $d$-$K$-complete} if every right  $d$-$K$-Cauchy sequence is
$d$-convergent;
\item\;  {\it sequentially  left (right) Smyth complete} if every left (right)  $d$-$K$-Cauchy sequence is
$d^s$-convergent.\eite

\begin{remark}\hfill\begin{itemize}
\item[{\rm 1.}]
It is obvious that a sequence is left $d$-$K$-Cauchy
   if and only if it is right $\bar{d}$-$K$-Cauchy.  Also a left (right) Smyth complete quasi-pseudometric space is left (right) $K$-complete
    and the space $(X,d)$ is right Smyth complete
 if and only if $(X,\bar d)$ is left Smyth complete. For this reason, some authors call a Smyth complete  space a left Smyth complete.
\item[{\rm 2.}]  The notion of Smyth completeness, introduced by Smyth in \cite{smyth88} (see also \cite{smyth94}),  is an important notion in quasi-metric and quasi-uniform spaces as well as for the applications to theoretical computer science (see, for instance, \cite{romag-val08}, \cite{shellek95}). A good presentation of this notion is given in Section 7.1 of the book \cite{Larrecq}.
\item[{\rm 3.}] There are examples showing that a $d$-convergent sequence need not  be
left $d$-$K$-Cauchy, showing that in the asymmetric case the situation is far more complicated than in the
symmetric one (see \cite{reily-subram82}).
\item[{\rm 4.}]
  If each convergent sequence in a regular quasi-metric space $(X,d)$
   admits a left $K$-Cauchy subsequence, then $X$ is metrizable
   (\cite{kunzi-reily93}).
   \end{itemize} \end{remark}

   \begin{remark}\hfill\begin{itemize}
       \item[\rm 1.] One can define  more general notions of completeness by replacing in Definition \ref{def.rKC} the sequences with nets. Stoltenberg \cite[Example 2.4]{stolt69} gave an example of a sequentially right $K$-complete $T_1$ quasi-metric space which is not right $K$-complete (i.e., not right $K$-complete by nets). See \cite{cobz20} for some further specifications.
 \item[\rm 2.]  In the case of Smyth completeness, the completeness by nets is equivalent to the completeness by sequences (see \cite{romag15}). Also, the left (or right) Smyth completeness implies
 the completeness of the pseudometric space $(X,d^s)$. In this case one says that the quasi-pseudometric space $(X,d)$ is bicomplete.
\end{itemize}\end{remark}

The following result is the quasi-pseudometric analog of a well-known property in metric spaces.

\begin{prop}[see \cite{Cobzas}, Section 1.2]\label{p1.rKC} Let $(X,d)$ be a quasi-pseudometric space. If a right $K$-Cauchy sequence $ (x_n)$ contains a subsequence $d$-convergent ($\bar d$-convergent, $d^s$-convergent) to some $x\in X$, then the sequence $(x_n)$ is $d$-convergent ($\bar d$-convergent, $d^s$-convergent) to $x$.
\end{prop}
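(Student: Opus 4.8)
The plan is to exploit the triangle inequality (QM2) together with the one-sided control provided by the right $K$-Cauchy condition \eqref{def.r-Cauchy}, which bounds $d(x_m,x_n)$ for the \emph{later} index $m$ and the \emph{earlier} index $n$. Since the three convergence notions are linked by \eqref{char-rho-conv3}, it suffices to treat the $d$- and $\bar d$-convergent cases separately and then combine them for the $d^s$-case. Throughout, let $(x_{n_j})_j$ denote the given convergent subsequence, so that $n_j\to\infty$.

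First I would treat the $d$-convergent case, where by \eqref{char-rho-conv1} the hypothesis reads $d(x,x_{n_j})\to 0$ and the goal is $d(x,x_n)\to 0$. Fix $\epsic>0$. Choose $n_\epsic$ from \eqref{def.r-Cauchy} so that $d(x_m,x_n)<\epsic/2$ whenever $n_\epsic\le n<m$, and choose $J$ so that $d(x,x_{n_j})<\epsic/2$ for $j\ge J$. Now fix any $n\ge n_\epsic$ and pick $j\ge J$ large enough that $n_j>n$ (possible since $n_j\to\infty$). The triangle inequality gives $d(x,x_n)\le d(x,x_{n_j})+d(x_{n_j},x_n)$, and here the second term is $<\epsic/2$ because $n_\epsic\le n<n_j$ makes it an instance of \eqref{def.r-Cauchy} with later index $n_j$; hence $d(x,x_n)<\epsic$. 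As $n\ge n_\epsic$ was arbitrary, $d(x,x_n)\to 0$.

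The $\bar d$-convergent case is symmetric in spirit but requires inserting the subsequence term on the other side. Here the hypothesis is $d(x_{n_j},x)\to 0$ and the goal is $d(x_n,x)\to 0$ (by \eqref{char-rho-conv2}). Fix $\epsic>0$, take $n_\epsic$ as above, and fix one index $p=n_{j_0}$ from the subsequence with $p\ge n_\epsic$ and $d(x_p,x)<\epsic/2$ (possible since $d(x_{n_j},x)\to 0$ and $n_j\to\infty$). For every $n>p$ the triangle inequality gives $d(x_n,x)\le d(x_n,x_p)+d(x_p,x)$, where now $d(x_n,x_p)<\epsic/2$ by \eqref{def.r-Cauchy} with later index $n$ and earlier index $p$; thus $d(x_n,x)<\epsic$ for all $n>p$, which yields $d(x_n,x)\to 0$. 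Finally, the $d^s$-case follows at once: by \eqref{char-rho-conv3} a $d^s$-convergent subsequence is both $d$- and $\bar d$-convergent, so the two previous cases make $(x_n)$ both $d$- and $\bar d$-convergent to $x$, hence $d^s$-convergent.

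The only real subtlety --- and the single point where asymmetry matters --- is matching the \emph{direction} of the distance controlled by \eqref{def.r-Cauchy} (namely $d(\text{later},\text{earlier})$) to the direction demanded by each convergence notion. This forces the subsequence term to be inserted before $x_n$ in the $d$-case but after $x_n$ in the $\bar d$-case, and in the latter a single fixed term $x_p$ suffices whereas in the former the auxiliary index $j$ must be chosen depending on $n$. No completeness is used; the argument is purely the asymmetric analog of the classical fact that a Cauchy sequence with a convergent subsequence converges.
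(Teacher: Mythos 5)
Your proof is correct, and it is essentially the standard argument: the paper itself states Proposition~\ref{p1.rKC} without proof, citing \cite{Cobzas}, Section~1.2, where the same triangle-inequality argument is used. You have correctly handled the one genuinely asymmetric point --- that \eqref{def.r-Cauchy} controls $d(x_m,x_n)$ only for $m>n$, forcing the subsequence term to be inserted on opposite sides of $x_n$ in the $d$- and $\bar d$-cases --- and the $d^s$-case then follows from \eqref{char-rho-conv3} exactly as you say.
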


\subsection{Ekeland principle in quasi-pseudometric spaces}
 The following  version of Ekeland variational principle in quasi-pseudometric spaces was proved in \cite{cobz19}. For  a quasi-pseudometric space $X$,  a function $f:X\to\Real\cup\{\infty\},\,\alpha >0$ and $x\in X$ put
\bequ\label{def.Sx-Jx}
S_\alpha(x)=\{y\in X : f(y)+\alpha d(y,x)\le f(x)\}\,.\eequ

\begin{theo}\label{t.Ek2-qm} Let $(X,d)$ be a sequentially right $K$-complete quasi-pseudometric space and $f:X\to\Real\cup\{\infty\}$ a proper bounded below  lsc function.
Given  $\epsic,\lbda >0$ and  $x_0\in \dom f$ there exists $z\in X$ such that
\bequ\label{eq1.Ek2-qm}\begin{aligned}
{\rm (i)}\quad&  f(z)+\frac{\epsic}{\lbda}d(z,x_0)\le f(x_0);\\
{\rm (ii)}\quad& f(y)=f(z) \;\mbx{ for all  }\; y\in S_\gamma(z);\\
{\rm (iii)}\quad& f(z)<f(x)+\frac{\epsic}{\lbda}d(x,z)\quad\mbx{for all }\; x\in X\sms S_\gamma(z)\,,
\end{aligned}\eequ
where $\gamma=\eps/\lbda$.

If, further,
$
f(x_0)\le\epsic+\inf f(X),
$
then
\bequs\label{eq3.Ek2-qm}
{\rm (iv)}\quad
d(z,x_0)\le\lbda.\qquad\qquad\qquad\qquad\qquad\qquad\qquad\qquad\;
\eequs\end{theo}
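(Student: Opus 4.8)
The plan is to run the classical iterative (minimizing-sequence) argument behind Ekeland's principle, carefully adapted to the asymmetric, right $K$-complete setting. Throughout I set $\g=\eps/\lbda$, so that the relevant sublevel sets are exactly $S_\g(x)=\{y\in X:f(y)+\g d(y,x)\le f(x)\}$. First I would record the three elementary properties of these sets. Each $x$ lies in $S_\g(x)$ (since $d(x,x)=0$), so they are nonempty; if $y\in S_\g(x)$ then $f(y)\le f(x)$; and they are nested, in the sense that $y\in S_\g(x)$ implies $S_\g(y)\sse S_\g(x)$. The nesting is exactly where the triangle inequality (QM2) enters: if $u\in S_\g(y)$ and $y\in S_\g(x)$, then adding $f(u)+\g d(u,y)\le f(y)$ and $f(y)+\g d(y,x)\le f(x)$ and using $d(u,x)\le d(u,y)+d(y,x)$ yields $u\in S_\g(x)$.

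Next I would build the sequence. Starting from $x_0$, and writing $m_n=\inf\{f(y):y\in S_\g(x_n)\}\ge\inf f(X)>-\infty$, I recursively choose $x_{n+1}\in S_\g(x_n)$ with $f(x_{n+1})\le m_n+2^{-n}$. Since $x_{n+1}\in S_\g(x_n)$, the values $f(x_n)$ are nonincreasing and bounded below, hence convergent to some $L\ge\inf f(X)$; the squeeze $m_n\le f(x_{n+1})\le m_n+2^{-n}$ then forces $m_n\to L$ as well. Moreover, for $m>n$ the nesting gives $x_m\in S_\g(x_n)$, so $\g\,d(x_m,x_n)\le f(x_n)-f(x_m)\to 0$; this is precisely the statement that $(x_n)$ is right $d$-$K$-Cauchy. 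By sequential right $K$-completeness it is $d$-convergent to some $z\in X$, i.e.\ $d(z,x_n)\to 0$.

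The heart of the proof is the limit passage, and this is the step I expect to require the most care, because of the asymmetry of $d$. Lower semicontinuity of $f$ (with respect to $\tau_d$) together with $x_n\xrightarrow{d}z$ gives $f(z)\le\liminf_n f(x_n)=L$. To obtain (i) and, more generally, $z\in S_\g(x_n)$ for every $n$, I would fix $n$ and use QM2 in the correct order, $d(z,x_n)\le d(z,x_m)+d(x_m,x_n)$, let $m\to\infty$ (so $d(z,x_m)\to0$ and $\g\,d(x_m,x_n)\le f(x_n)-f(x_m)\to f(x_n)-L$), and combine with $f(z)\le L$ to get $f(z)+\g d(z,x_n)\le f(x_n)$; the case $n=0$ is exactly (i). Since $z\in S_\g(x_n)$, we also have $f(z)\ge m_n\to L$, whence $f(z)=L$.

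Finally I would harvest the conclusions. For (ii), any $y\in S_\g(z)$ satisfies $y\in S_\g(x_n)$ for all $n$ by nesting, so $f(y)\ge m_n\to L=f(z)$, while $y\in S_\g(z)$ gives $f(y)\le f(z)$; hence $f(y)=f(z)$. Assertion (iii) is then immediate, since $x\notin S_\g(z)$ means exactly $f(z)<f(x)+\g d(x,z)$. And (iv) is pure arithmetic: under the extra hypothesis, (i) gives $\g\,d(z,x_0)\le f(x_0)-f(z)\le f(x_0)-\inf f(X)\le\eps$, so $d(z,x_0)\le\eps/\g=\lbda$.
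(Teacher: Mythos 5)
Your proof is correct: each step survives the asymmetry — the nesting $y\in S_\g(x)\Rightarrow S_\g(y)\sse S_\g(x)$ uses QM2 in the right order, the estimate $\g\, d(x_m,x_n)\le f(x_n)-f(x_m)$ for $m>n$ is exactly the right $d$-$K$-Cauchy condition of Definition \ref{def.rKC}, and the limit passage via $d(z,x_n)\le d(z,x_m)+d(x_m,x_n)$ together with $\tau_d$-lower semicontinuity correctly yields $z\in S_\g(x_n)$ for all $n$, from which (i)--(iv) follow as you say. It is also essentially the paper's own route: the sequence you construct, $x_{n+1}\in S_\g(x_n)$ with $f(x_{n+1})\le \inf f(S_\g(x_n))+2^{-n}$, is precisely a Picard sequence for the set-valued map $S_\g$, which is the mechanism the paper invokes (through Proposition \ref{p1.Pic}, quoted from \cite{cobz19}) to establish the theorem, so your argument simply inlines that proof rather than factoring it through the cited proposition.
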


Obviously, an analog of Theorem \ref{t.EkVPb} holds in this case too.
\begin{theo}\label{t.Ek2b-qm} Let $(X,d)$ be a sequentially right $K$-complete quasi-pseudometric space and $f:X\to\Real\cup\{\infty\}$ a proper bounded below  lsc function.
Given  $\epsic,\lbda' >0$ and  $x_0\in \dom f$ there exists $z\in X$ such that
\bequ\label{eq1.Ek2-qm}\begin{aligned}
{\rm (i')}\quad&  f(z)+\ \lbda'd(z,x_0)\le f(x_0);\\
{\rm (ii')}\quad& f(y)=f(z) \;\mbx{ for all  }\; y\in S_{\lbda'}(z);\\
{\rm (iii')}\quad& f(z)<f(x)+ \lbda'd(x,z)\quad\mbx{for all }\; x\in X\sms S_{\lbda'}(z)\,.
\end{aligned}\eequ

If, further,
$
f(x_0)\le\epsic+\inf f(X),
$
then
\bequs\label{eq3.Ek2-qm}
{\rm (iv')} \quad
d(z,x_0)\le\eps/\lbda'.\qquad\qquad\qquad\qquad\qquad\qquad \qquad\;\,
\eequs\end{theo}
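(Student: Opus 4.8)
The plan is to deduce this theorem directly from Theorem \ref{t.Ek2-qm} by means of the reparametrization \eqref{EkVP-a-b} that already yielded the equivalence of Theorems \ref{t.EkVP} and \ref{t.EkVPb} in the metric case. The hypotheses of the two quasi-pseudometric statements are literally the same (sequentially right $K$-complete space, proper bounded below lsc function), so nothing has to be re-verified on that side. Concretely, given $\epsic,\lbda'>0$ and $x_0\in\dom f$, I would set $\lbda:=\epsic/\lbda'$, which is again strictly positive, and apply Theorem \ref{t.Ek2-qm} with this $\lbda$ (keeping the same $\epsic$ and $x_0$). This produces a point $z\in X$ satisfying conditions (i)--(iii) of \eqref{eq1.Ek2-qm}, together with (iv) under the extra hypothesis on $f(x_0)$.

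The key observation is that the auxiliary parameter $\gamma=\epsic/\lbda$ of Theorem \ref{t.Ek2-qm} becomes, under this substitution, $\gamma=\epsic/(\epsic/\lbda')=\lbda'$. Hence the coefficient $\epsic/\lbda$ multiplying the quasi-pseudometric equals $\lbda'$ and, more importantly, the sublevel set $S_\gamma(z)$ of \eqref{def.Sx-Jx} coincides with $S_{\lbda'}(z)$. With these identifications condition (i) reads $f(z)+\lbda'd(z,x_0)\le f(x_0)$, which is (i$'$); condition (ii) reads $f(y)=f(z)$ for all $y\in S_{\lbda'}(z)$, which is (ii$'$); and condition (iii) reads $f(z)<f(x)+\lbda'd(x,z)$ for all $x\in X\sms S_{\lbda'}(z)$, which is (iii$'$). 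Finally, under the hypothesis $f(x_0)\le\epsic+\inf f(X)$, conclusion (iv) gives $d(z,x_0)\le\lbda=\epsic/\lbda'$, which is exactly (iv$'$).

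Since every item of \eqref{eq1.Ek2-qm} transcribes verbatim into the corresponding primed item after the substitution $\lbda=\epsic/\lbda'$, nothing further is required. I do not expect a genuine obstacle here: the whole content is the bookkeeping check that the perturbation coefficient $\epsic/\lbda$ and the index $\gamma$ of the sublevel set \emph{simultaneously} collapse to $\lbda'$. The only point deserving a moment's attention is precisely that a single parameter $\lbda$ of Theorem \ref{t.Ek2-qm} governs both the perturbation weight and the sublevel set, so that fixing $\lbda=\epsic/\lbda'$ automatically aligns (ii) with (ii$'$) and (iii) with (iii$'$) without any independent adjustment.
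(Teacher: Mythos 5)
Your proposal is correct and is exactly the argument the paper intends: the paper derives Theorem \ref{t.Ek2b-qm} from Theorem \ref{t.Ek2-qm} via the same substitution $\lbda=\epsic/\lbda'$ of \eqref{EkVP-a-b} used for Theorems \ref{t.EkVP} and \ref{t.EkVPb}, which the paper records only with the remark ``obviously.'' Your bookkeeping check that $\gamma=\epsic/\lbda$ collapses to $\lbda'$, so that the perturbation coefficient and the sublevel set $S_\gamma(z)=S_{\lbda'}(z)$ align simultaneously, is precisely the point, and nothing further is needed.
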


The proof of Theorem \ref{t.Ek2-qm} is based on the properties of Picard sequences corresponding to the set-valued map  $S_\alpha:X\rightrightarrows X$. A sequence $(x_n)_{n=0}^\infty$ in $X$ is called a  \emph{Picard sequence}  for  $S_\alpha$ if $x_{n+1}\in S_\alpha(x_n)$ for all $n\in\Nat_0,$  for a given $x_0\in X$. We mention some of the properties of these sets $S_\alpha(x)$ which will be used in what follows.

Let $(X,d)$ be a quasi-pseudometric space and $f:X\to\Real\cup\{\infty\}$ a proper function, i.e., $$\dom f:=\{x\in X: f(x)<\infty\}\ne\ety.$$ It is obvious that $S_\alpha(x)=X$ if $f(x)=\infty$ and
$$
S_\beta(x)\sse S_\alpha(x)$$
for $0<\alpha<\beta.$

\begin{prop}\label{p1.Sx} Let $(X,d)$ be a quasi-pseudometric  space, $f:X\to\Real\cup\{\infty\}$ a proper function, $\alpha>0$ and $x\in\dom f.$
 The set $S_\alpha(x)$ has the following properties:
\bequ\label{eq2.Sx}
\begin{aligned}
  {\rm (i)}\;\; &x\in S_\alpha(x)\quad\mbx{and}\quad S_\alpha(x)\sse\dom f;\\
  {\rm (ii)}\;\; &y\in S_\alpha(x) \Ra f(y)\le f(x)\;\mbx{ and }\; S_\alpha(y)\subseteq S_\alpha(x);\\
  {\rm (iii)}\;\; &y\in S_\alpha(x)\sms\ov{\{x\}} \Ra f(y)<f(x);\\
  {\rm (iv)}\;\; &\mbx{if $f$ is bounded below, then}\\&S_\alpha(x)\sms \ov{\{x\}}\ne\ety \Ra f(x)>\inf f(S_\alpha(x));\\
  {\rm (v)}\;\; &\mbx{if $f$ is lsc, then }  S_\alpha(x)\mbx{ is closed}.
  \end{aligned}\eequ
\end{prop}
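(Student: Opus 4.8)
The plan is to verify the five items in order, each reducing to the defining relation $S_\alpha(x)=\{y:f(y)+\alpha d(y,x)\le f(x)\}$ together with the axioms (QM1) and (QM2). For (i), the membership $x\in S_\alpha(x)$ is immediate because $d(x,x)=0$ forces $f(x)+\alpha d(x,x)=f(x)\le f(x)$; and for any $y\in S_\alpha(x)$ the chain $f(y)\le f(y)+\alpha d(y,x)\le f(x)<\infty$ (valid since $x\in\dom f$ and $\alpha d(y,x)\ge 0$) gives $y\in\dom f$, whence $S_\alpha(x)\subseteq\dom f$. That same chain already yields the first assertion of (ii). For the inclusion $S_\alpha(y)\subseteq S_\alpha(x)$ in (ii), I would take $z\in S_\alpha(y)$ and apply the triangle inequality (QM2) in the form $d(z,x)\le d(z,y)+d(y,x)$:
\[
f(z)+\alpha d(z,x)\le\big(f(z)+\alpha d(z,y)\big)+\alpha d(y,x)\le f(y)+\alpha d(y,x)\le f(x),
\]
where the middle step uses $z\in S_\alpha(y)$ and the last uses $y\in S_\alpha(x)$.

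The first genuinely asymmetric point is (iii), and here I would first pin down the closure $\overline{\{x\}}$ in the topology $\tau_d$. Since the $\tau_d$-neighbourhoods of a point $y$ are exactly the supersets of the balls $B_d(y,r)$, one has $y\in\overline{\{x\}}$ iff $x\in B_d(y,r)$ for every $r>0$, i.e. iff $d(y,x)<r$ for all $r>0$, i.e. iff $d(y,x)=0$; equivalently $y\notin\overline{\{x\}}$ exactly when $d(y,x)>0$. This is consistent with the convergence rule \eqref{char-rho-conv1}, the constant net at $x$ converging to $y$ precisely when $d(y,x)\to 0$. With this identification, if $y\in S_\alpha(x)\setminus\overline{\{x\}}$ then $d(y,x)>0$, so $\alpha d(y,x)>0$ and the defining inequality sharpens to $f(y)<f(x)$. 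Item (iv) is then an immediate corollary: if some $y$ lies in $S_\alpha(x)\setminus\overline{\{x\}}$, then $\inf f(S_\alpha(x))\le f(y)<f(x)$, the hypothesis that $f$ is bounded below serving to guarantee that this infimum is a real number, so that the strict inequality $f(x)>\inf f(S_\alpha(x))$ is meaningful.

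For (v) the plan is to write $S_\alpha(x)=g^{-1}\big((-\infty,f(x)]\big)$ with $g:=f+\alpha\,d(\cdot,x)$ and to show that $g$ is $\tau_d$-lower semicontinuous, lower semicontinuity and closedness being understood throughout with respect to $\tau_d$. Here I would invoke Proposition \ref{p.top-qsm1}(3), which states precisely that $d(\cdot,x)$ is $\tau_d$-lsc (this is the asymmetric subtlety: it is the first variable of $d$ that is $\tau_d$-lsc, the second being only $\tau_d$-usc). A sum of two $\tau_d$-lsc functions is $\tau_d$-lsc, so $g$ is $\tau_d$-lsc and its sublevel set $S_\alpha(x)$ is $\tau_d$-closed. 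I expect the main obstacle to lie not in any single computation but in consistently tracking the asymmetry: obtaining $\overline{\{x\}}=\{y:d(y,x)=0\}$ with the correct order of the arguments in (iii), and selecting the correct variable of $d$ that is $\tau_d$-lsc in (v). Everything else is bookkeeping with the triangle inequality.
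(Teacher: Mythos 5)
Your proof is correct, and since the paper states Proposition \ref{p1.Sx} without proof (it is imported from \cite{cobz19}), your argument is exactly the intended one, matching the paper's own toolkit: the characterization $\overline{\{x\}}=\{y\in X: d(y,x)=0\}$, with the arguments of $d$ in that order, is precisely what the paper itself uses as $y\in\overline{\{z\}}^d\iff d(y,z)=0$ in the proof of Theorem \ref{T1.str-Ek-Suz}, and your appeal to Proposition \ref{p.top-qsm1}(3) for the $\tau_d$-lower semicontinuity of $d(\cdot,x)$ is the correct asymmetric ingredient for (v). One marginal observation: in (iv) the bounded-below hypothesis only ensures $\inf f(S_\alpha(x))\in\Real$, as you note; the strict inequality $f(x)>\inf f(S_\alpha(x))$ would follow from (iii) even without it, since $f(x)<\infty$.
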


The key result used in the proofs of various variational principles in \cite{cobz19} is the following.
\begin{prop}[\cite{cobz19}, Prop. 2.14]\label{p1.Pic}
If the space $ (X,d) $ is sequentially right $K$-complete and the function $f$ is bounded below and  lsc,  then there exists a point $ z\in X $     such that
 \bequ  \label{eq3.Pic}\begin{aligned}
      {\rm (i)}&\quad f(y)=f(z)=\inf f(S_\alpha(z))\;\mbx{ and} \\
   {\rm (ii)}&\quad S_\alpha(y)\sse\ov{\{y\}}\,,
   \end{aligned}\eequ
 for all $ y\in S_\alpha(z).$
 \end{prop}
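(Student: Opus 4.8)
The plan is to build a suitable Picard sequence for the map $S_\alpha$ starting at the given $x_0$ and to show that its limit is the point $z$ we seek. Writing $m_n:=\inf f(S_\alpha(x_n))$, which is finite because $f$ is bounded below and $x_n\in\dom f$ by Proposition \ref{p1.Sx}(i), I would choose at each stage some $x_{n+1}\in S_\alpha(x_n)$ with $f(x_{n+1})\le m_n+2^{-n-1}$. The monotonicity built into the $S_\alpha$'s, namely $S_\alpha(x_{n+1})\sse S_\alpha(x_n)$ from Proposition \ref{p1.Sx}(ii), forces $(m_n)$ to be nondecreasing and $(f(x_n))$ to be nonincreasing; since $m_n\le f(x_{n+1})\le m_n+2^{-n-1}$ and both sequences are monotone and bounded, they are squeezed to a common limit $L$.

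The first genuine point is to check that $(x_n)$ is right $d$-$K$-Cauchy. By the nesting of the $S_\alpha$'s one has $x_m\in S_\alpha(x_n)$ whenever $m\ge n$, and the very definition of $S_\alpha$ then yields $\alpha\,d(x_m,x_n)\le f(x_n)-f(x_m)\le f(x_n)-L$. As $f(x_n)\downarrow L$, the right-hand side tends to $0$, which is precisely the right $K$-Cauchy condition for $m>n$. Sequential right $K$-completeness then delivers a point $z$ with $x_n\xrightarrow{d}z$. Because each $S_\alpha(x_n)$ is $\tau_d$-closed by Proposition \ref{p1.Sx}(v) (here the lower semicontinuity of $f$ enters) and contains the tail $(x_m)_{m\ge n}$, the $d$-limit $z$ lies in every $S_\alpha(x_n)$; hence, by nesting once more, $S_\alpha(z)\sse S_\alpha(x_n)$ for all $n$.

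To establish (i) I would first pin down $f(z)$. On one side, $z\in S_\alpha(x_n)$ gives $f(z)\ge m_n\to L$; on the other, lower semicontinuity along $x_n\xrightarrow{d}z$ gives $f(z)\le\liminf_n f(x_n)=L$, so $f(z)=L$. Now for any $y\in S_\alpha(z)$, membership in $S_\alpha(z)\sse S_\alpha(x_n)$ forces $f(y)\ge m_n\to L$, while Proposition \ref{p1.Sx}(ii) gives $f(y)\le f(z)=L$; therefore $f(y)=f(z)=L=\inf f(S_\alpha(z))$, which is exactly (i).

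For (ii), take $y\in S_\alpha(z)$ and any $w\in S_\alpha(y)$. Nesting gives $S_\alpha(y)\sse S_\alpha(z)$, so $w\in S_\alpha(z)$ and thus $f(w)=L=f(y)$ by (i). If $w\notin\ov{\{y\}}$ held, Proposition \ref{p1.Sx}(iii) would force $f(w)<f(y)$, a contradiction; hence $w\in\ov{\{y\}}$, that is, $S_\alpha(y)\sse\ov{\{y\}}$. The step I expect to be most delicate is the passage to the limit in the asymmetric setting: one must verify that the correct one-sided Cauchy notion (right $K$) simultaneously matches the available completeness and the direction of the inequality $\alpha\,d(x_m,x_n)\le f(x_n)-f(x_m)$, and that it is the $\tau_d$-closedness of the sublevel sets $S_\alpha(x_n)$, rather than $\tau_{\bar d}$-closedness, that lets $z$ be captured inside each of them. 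These are exactly the places where the classical metric argument silently uses symmetry.
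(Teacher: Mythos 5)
Your proof is correct and follows essentially the route the paper indicates: the paper itself only cites \cite{cobz19} (Prop.~2.14) for this result, but explicitly notes that the argument there rests on Picard sequences for the set-valued map $S_\alpha$, which is exactly your construction (infimum-approximating choices $x_{n+1}\in S_\alpha(x_n)$, the telescoping bound $\alpha\, d(x_m,x_n)\le f(x_n)-f(x_m)$ giving right $K$-Cauchyness, sequential right $K$-completeness, and $\tau_d$-closedness of the nested sets $S_\alpha(x_n)$ to capture the limit $z$). Your handling of the asymmetric subtleties — the direction of the Cauchy condition and the use of $\tau_d$-lsc of $f$ and of $d(\cdot,x)$ for closedness — matches the paper's framework (Proposition \ref{p1.Sx} and Definition \ref{def.rKC}) correctly.
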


 \begin{remark}
In fact, in \cite{cobz19}, Proposition \ref{p1.Pic} is proved in a slightly more general context, namely for a nearly lsc function $f$, meaning  that
$$
f(x)\le \liminf_{n\to\infty}f(x_n)\,,$$
for every sequence $(x_n)$ in $X$ with pairwise distinct terms in $X$ such that $x_n\xrightarrow{d}x.$
 \end{remark}

\subsection{The strong Ekeland principle -- Georgiev's version}

   We show that Turinici  proof  \cite{turin05} of the strong EkVP (Theorem \ref{t.str-EkVP1}) can be adapted to obtain a proof of a quasi-pseudometric version  of the strong Ekeland Variational Principle.

  \begin{theo}\label{t.sEk-qm} Let $(X,d)$ be a sequentially right $K$-complete quasi-pseudometric space and $f:X\to\Real\cup\{\infty\}$ a proper bounded below  lsc function.
Given  $\gamma,\delta >0$ and  $x_0\in \dom f$ there exists $z\in X$ such that
\bequ\label{eq1.sEk-qm}\begin{aligned}
{\rm (a)}\quad&  f(z)+\gamma d(z,x_0)\le f(x_0)+\delta;\\
{\rm (b)}\quad& f(y)=f(z) \;\mbx{ for all  }\; y\in S_{\gamma}(z);\\
{\rm (c)}\quad& f(z)<f(x)+ \gamma d(x,z)\quad\mbx{for all }\; x\in X\sms S_{\gamma}(z);\\
{\rm (d)}\quad&  f(x_n)+\gamma d(z,x_n)\to f(z)\;\Ra\; d(x_n,z)\to 0,\\&\mbox{for every  sequence } (x_n) \mbx{  in }  X.
\end{aligned}\eequ
 \end{theo}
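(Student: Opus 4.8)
The plan is to adapt Turinici's reduction of the strong principle to the ordinary one \cite{turin05}, the extra strength being purchased by the slack $\delta$ in (a) through the use of a \emph{strictly smaller} perturbation coefficient. After subtracting the constant $m:=\inf f(X)$ we may assume $f\ge 0$. I would fix a coefficient $\gamma_1\in(0,\gamma)$ chosen so close to $\gamma$ that
\bequ
(\gamma-\gamma_1)\,\frac{f(x_0)-m}{\gamma_1}\le\delta ,
\eequ
and apply Proposition \ref{p1.Pic} with $\alpha=\gamma_1$ to the Picard iteration for $S_{\gamma_1}$ started at $x_0$. This produces a point $z\in S_{\gamma_1}(x_0)$ for which $f\equiv f(z)$ on $S_{\gamma_1}(z)$ and $S_{\gamma_1}(z)\sse\ov{\{z\}}$; together with Proposition \ref{p1.Sx} this yields the global \emph{backward} lower bound $f(x)+\gamma_1 d(x,z)\ge f(z)$ for every $x\in X$ (equality on $S_{\gamma_1}(z)$, strict inequality off it).

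First I would dispatch (a), (b), (c), which are routine. From $z\in S_{\gamma_1}(x_0)$ one has $f(z)+\gamma_1 d(z,x_0)\le f(x_0)$, whence $\gamma_1 d(z,x_0)\le f(x_0)-m$; adding $(\gamma-\gamma_1)d(z,x_0)\le\delta$ (the displayed choice of $\gamma_1$) gives $f(z)+\gamma d(z,x_0)\le f(x_0)+\delta$, i.e.\ (a). Since $\gamma_1<\gamma$ forces $S_{\gamma}(z)\sse S_{\gamma_1}(z)$, the constancy of $f$ on the larger set gives (b). Finally (c) is immediate from the definition \eqref{def.Sx-Jx}: an $x\in X\sms S_\gamma(z)$ is, by definition, one with $f(z)<f(x)+\gamma d(x,z)$.

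The substance is (d), and here the direction of the distances is decisive. Let $(x_n)$ satisfy $f(x_n)+\gamma d(z,x_n)\to f(z)$. Combining the global bound $f(x_n)+\gamma_1 d(x_n,z)\ge f(z)$ with the hypothesis yields only $\gamma_1 d(x_n,z)\ge \gamma\, d(z,x_n)-o(1)$, i.e.\ a \emph{lower} estimate for the conjugate distance $d(x_n,z)$; the two-coefficient gap bounds $d(x_n,z)$ from \emph{above} precisely when the perturbing term is $d(x_n,z)$, not the forward term $d(z,x_n)$ present in the hypothesis. Thus my route for (d) would be indirect: use the hypothesis (with $f(x_n)\ge f(z)-\gamma_1 d(x_n,z)$ and the resulting boundedness of the quantities involved) to try to show that $(x_n)$, or a subsequence, is right $d$-$K$-Cauchy, and then invoke sequential right $K$-completeness together with Proposition \ref{p1.rKC} to upgrade $d$-convergence to $d^s$-convergence to $z$, which would deliver $d(x_n,z)\to0$ as required by \eqref{char-rho-conv2}.

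The hard part will be exactly this bridging step: passing from the forward minimizing condition $f(x_n)+\gamma d(z,x_n)\to f(z)$ (a $\tau_d$-type smallness of $d(z,x_n)$) to the conjugate conclusion $d(x_n,z)\to0$ (a $\tau_{\bar d}$ statement). The lower bound supplied by Proposition \ref{p1.Pic} is the wrong-handed one for this implication, so the gap between $\tau_d$ and $\tau_{\bar d}$ must be closed by hand, and I expect it to reduce to proving that every sequence with $f(x_n)+\gamma d(z,x_n)\to f(z)$ is right $K$-Cauchy (so that Proposition \ref{p1.rKC} applies). It is here---not in (a)--(c)---that the asymmetry of $d$ exerts its full force; one should be alert to the possibility that right $K$-completeness alone is insufficient and that a Smyth/bicompleteness-type refinement of the hypotheses, or a matching of the distance directions in (d), is what makes the implication go through.
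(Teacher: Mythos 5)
Your treatment of (a)--(c) is correct and is essentially the paper's own argument. The paper fixes $0<\lbda<1$ with $\frac{\lbda}{1-\lbda}\left(f(x_0)-\inf f(X)\right)\le\delta$ and applies Theorem \ref{t.Ek2b-qm} with the reduced coefficient $\lbda'=(1-\lbda)\gamma$; this is exactly your $\gamma_1$ (note $(\gamma-\gamma_1)/\gamma_1=\lbda/(1-\lbda)$ under $\gamma_1=(1-\lbda)\gamma$), and (a) is extracted from $f(z)+\lbda' d(z,x_0)\le f(x_0)$ by the same bookkeeping you perform, while (b) follows from $S_\gamma(z)\sse S_{\lbda'}(z)$ and (c) from the definition of $S_\gamma(z)$. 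Your use of Proposition \ref{p1.Pic} with a Picard sequence started at $x_0$ is legitimate and is just Theorem \ref{t.Ek2b-qm} in disguise (its condition (i$'$) is precisely $z\in S_{\lbda'}(x_0)$). The only cosmetic difference is that the paper first restricts to the sublevel set $X_0=\{y\in X: f(y)\le f(x_0)+\delta\}$ (closed, hence sequentially right $K$-complete) before invoking Theorem \ref{t.Ek2b-qm} --- a vestige of Turinici's scheme that plays no essential role here, since, as you in effect observe, the inequality $f(z)<f(x)+\lbda' d(x,z)$ for all $x\in X\sms S_{\lbda'}(z)$ holds globally by the very definition of $S_{\lbda'}(z)$, which is what the paper itself uses (its \eqref{eq4.sEk-qm}).

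For (d) you stop short, and your diagnosis of why is exactly right --- but the resolution is not the completeness-bridging route you sketch. The paper's proof of (d) tacitly reads the hypothesis in the backward form $f(x_n)+\gamma d(x_n,z)\to f(z)$: for $x_n\in S_{\lbda'}(z)$ it gets $f(x_n)=f(z)$ and $d(x_n,z)=0$, and for $x_n\in X\sms S_{\lbda'}(z)$ it writes $\lbda\gamma d(x_n,z)<f(x_n)+\gamma d(x_n,z)-f(z)\to 0$, which is precisely your two-coefficient gap estimate and requires $d(x_n,z)$, not $d(z,x_n)$, in the limit hypothesis. So the $d(z,x_n)$ in the statement of (d) is a direction slip inherited from the symmetric formulation in Theorem \ref{t.str-EkVP1}; with the corrected reading, your own setup closes (d) in two lines via $(\gamma-\gamma_1)d(x_n,z)\le f(x_n)+\gamma d(x_n,z)-f(z)$ off $S_{\gamma_1}(z)$. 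Moreover, your suspicion that the literal statement cannot be rescued is justified: it is false. Take $X=\{z\}\cup\{a_n:n\in\Nat\}$ with $d(z,a_n)=0$, $d(a_n,z)=1$, $d(a_n,a_m)=1$ for $n\ne m$, and $f(z)=0$, $f(a_n)=1/n$. This quasi-metric space is sequentially right $K$-complete (every right $K$-Cauchy sequence is eventually constant) and $f$ is $d$-lsc, proper and bounded below; condition (b) rules out every $a_k$ as the point of the theorem (since $z\in S_\gamma(a_k)$ for every $\gamma>0$ while $f(z)\ne f(a_k)$), and for the only remaining candidate $z$ the sequence $x_n=a_n$ satisfies $f(x_n)+\gamma d(z,x_n)=1/n\to 0=f(z)$ yet $d(x_n,z)=1$ for all $n$. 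Hence no argument via right $K$-Cauchyness and Proposition \ref{p1.rKC} can bridge from the forward hypothesis to the backward conclusion; the ``matching of the distance directions'' you flagged as a possible fix is exactly what the paper's proof does.
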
\begin{proof} Let
 $$
 X_0=\{y\in X : f(x)\le f(x_0)+\delta\}\,.$$

 Then $x_0\in X_0$ and $X_0$ is closed (because $f$ is lsc)  and so sequentially right $K$-complete. Also
 \bequ\label{eq.sEk-qm-inf}
 \inf f(X_0)=\inf f(X)\,.
 \eequ

 Indeed, if  $m:=\inf f(X)$ and $M:=\inf f(X_0)$, then $m\le M.$ Let $(x_n)$ be a sequence in $X$ such that $f(x_n)\to m$ as $n\to\infty.$ Then there exists $n_0\in \Nat$ such that
$f(x_n)\le m+\delta\le f(x_0)+\delta$,  that is, $x_n\in X_0,$ for all $n\ge n_0$. But then $M\le f(x_n),\,\forall n\ge n_0,$ which  for $n\to \infty$ yields  $ M\le m,$ and so  $m=M.$

 Let $0<\lbda<1$ be such that
  \bequ\label{eq2.sEk-qm}
  \frac{\lbda}{1-\lbda}\left(f(x_0)-\inf f(X)\right)\le\delta\,.
  \eequ

  By  Theorem \ref{t.Ek2b-qm} applied  to $ X_0,\, f|_{X_0} $ and $ \lbda':=(1-\lbda)\gamma $,   there exists $z\in X_0$ such that
 \bequ\label{eq3.sEk-qm}
 \begin{aligned}
{\rm (i)}\quad&  f(z)+\ \lbda'd(z,x_0)\le f(x_0);\\
{\rm (ii)}\quad& f(y)=f(z) \;\mbx{ for all  }\; y\in X_0\cap S_{\lbda'}(z)=S_{\lbda'}(z);\\
{\rm (iii)}\quad& f(z)<f(x)+ \lbda'd(x,z)\quad\mbx{for all }\; x\in X_0\sms S_{\lbda'}(z)\,.
\end{aligned}\eequ

To justify the equality $X_0\cap S_{\lbda'}(z)=S_{\lbda'}(z)$ in (ii) above, observe that
$$
z\in X_0\;\Ra\; S_{\lbda'}(z)\sse X_0\,.$$

Indeed, the existence of an element $x\in (X\sms X_0)\cap S_{\lbda'}(z)$ would yield the contradiction:
$$
f(x_0)+\delta <f(x)\le f(x)+\lbda' d(x,z)\le f(z)\le f(x_0)+\delta\,.$$

 By \eqref{eq3.sEk-qm}.(i), the definition of $\lbda'$ and  \eqref{eq2.sEk-qm},
 \begin{align*}
 \gamma d(x_0,z)&\le \frac1{1-\lambda}\left[f(x_0)-f(z)\right]\\
 &=f(x_0)-f(z)+\frac{\lambda}{1-\lambda}\left[f(x_0)-f(z)\right]\\
 &\le f(x_0)-f(z)+\frac{\lambda}{1-\lambda}\left[f(x_0)-\inf f(X)\right]\\
 &\le f(x_0)-f(z)+\delta\,,
   \end{align*}
showing that condition \eqref{eq1.sEk-qm}.(a) holds.

The inequality $\lbda'=(1-\lbda)\gamma<\gamma$ implies
$$
S_\gamma(z)\sse S_{\lbda'}(z)\,,$$
so that, by \eqref{eq3.sEk-qm}.(ii),
$ f(y)=f(z)$
for all $y\in S_\gamma(z),$ i.e., \eqref{eq1.sEk-qm}.(b) holds too.

The inequality \eqref{eq1.sEk-qm}.(c) follows from the definition of the set $S_\gamma(z).$

 Observe now that, by the definition of the set $S_{\lbda'}(z)$,
\bequ\label{eq4.sEk-qm}
f(z)<f(x)+(1-\lbda)\gamma d(x,z)\quad\mbx{for all}\quad x\in X\sms S_{\lbda'}(z)\,.
\eequ

 To  prove \eqref{eq1.sEk-qm}.(d), let
 $(x_n) $ be a sequence in $X$ such that $$\lim_{n\to\infty}\left[f(x_n)+\gamma d(z,x_n)\right]=f(z)\,. $$

 If $x_n\in S_{\lbda'}(z),$ then, by \eqref{eq3.sEk-qm}.(ii) $f(x_n)=f(z)$ and the inequality    $f(x_n)+\lbda' d(x_n,z)\le  f(z)$ implies $d(x_n,z)=0.$

 For all $n$ such that $x_n\in X\sms S_{\lbda'}(z)$ the inequality \eqref{eq4.sEk-qm} yields
 $$
 \lbda\gamma d(x_n,z)< f(x_n)+\gamma d(x_n,z)-f(z)\,\lra 0\;\mbx{ as }\; n\to \infty\,.$$

 Consequently,
 $$\lim_{n\to\infty}d(x_n,z)=0\,.$$
  \end{proof}

  \begin{remark} Actually, condition \eqref{eq1.sEk-qm}.(d)  says that the minimizing sequence $(x_n)$ is $\bar d$-convergent to $z$.
  \end{remark}

  \subsection{The strong Ekeland principle -- Suzuki's versions}

  As we have seen in Subsection \ref{Ss.qpm} completeness in quasi-pseudometric spaces has totally different features than that in  metric spaces. The situation is the same with compactness,
  see \cite{Cobzas}.

  In order to extend Theorem \ref{T1.str-EkVP-comp} to quasi-pseudometric spaces we consider the following notion.  A subset $Y$ of a quasi-pseudometric space $(X,d)$ is called
  $d$-\emph{bounded} if there exist $x\in X$ and $r>0$ such that
  \beqs
  Y\sse B_d[x,r]\,,\eeqs
or, equivalently,
  \beqs
  \sup\{d(x,y) :y\in Y\}<\infty\quad\mbx{for every}\quad x\in X.
  \eeqs

  We say that a  sequence $(x_n)_{n\in\Nat}$ in $X$ is  $d$-bounded if the set $\{x_n:n\in\Nat\}$ is $d$-bounded.

  Similar definitions are given for $\bar d$-boundedness.

  We have seen (Remark \ref{re.bd-comp}) that a boundedly compact metric space is complete.
  In the case of quasi-pseudometric spaces we have.
  \begin{prop}\label{p1.bd-Smyth}Let $(X,d)$ be a quasi-pseudometric space.
  If every $\bar d$-bounded sequence in $X$ contains a $d^s$-convergent subsequence, then the space $X$ is right Smyth complete.
  \end{prop}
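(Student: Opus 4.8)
The plan is to show that every right $d$-$K$-Cauchy sequence is automatically $\bar d$-bounded, so that the hypothesis supplies a $d^s$-convergent subsequence, and then to invoke Proposition \ref{p1.rKC} to promote subsequential convergence to convergence of the full sequence. Thus the argument reduces to one genuine lemma (the boundedness) plus one citation.

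First I would fix an arbitrary right $d$-$K$-Cauchy sequence $(x_n)$ and establish its $\bar d$-boundedness. Applying the defining property \eqref{def.r-Cauchy} with $\epsic=1$ produces an index $N$ such that $d(x_{N+k},x_N)<1$ for every $k\in\Nat$; combined with $d(x_N,x_N)=0$, this gives $\bar d(x_N,x_m)=d(x_m,x_N)<1$ for all $m\ge N$. Only the finitely many initial terms $x_1,\dots,x_{N-1}$ are left uncontrolled, so setting $R=\max\{1,\bar d(x_N,x_1),\dots,\bar d(x_N,x_{N-1})\}$ puts the whole sequence into the closed ball $B_{\bar d}[x_N,R]$. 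Hence $(x_n)$ is $\bar d$-bounded.

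By the hypothesis, $(x_n)$ then admits a subsequence $(x_{n_k})$ that is $d^s$-convergent to some $x\in X$. Since $(x_n)$ is right $K$-Cauchy and contains a $d^s$-convergent subsequence, Proposition \ref{p1.rKC} yields that the entire sequence $(x_n)$ is $d^s$-convergent to $x$. As $(x_n)$ was an arbitrary right $d$-$K$-Cauchy sequence, every such sequence is $d^s$-convergent, which is exactly the definition of right Smyth completeness.

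The only step requiring care is the $\bar d$-boundedness, and the subtlety there is purely one of orientation rather than of difficulty: the right $K$-Cauchy condition controls $d(x_m,x_n)$ for $m>n$, i.e. it bounds the later terms through the \emph{first} argument, which is precisely $\bar d(x_n,x_m)$. One must resist the temptation to read this as ordinary $d$-boundedness, which the right $K$-Cauchy condition does not in general deliver in the asymmetric setting; it is exactly the $\bar d$-flavour that matches the hypothesis. Everything beyond this observation is a direct appeal to the already-established Proposition \ref{p1.rKC}.
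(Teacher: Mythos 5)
Your proof is correct and follows essentially the same route as the paper's: apply the right $K$-Cauchy condition with $\epsic=1$ to obtain $\bar d$-boundedness of the sequence, extract a $d^s$-convergent subsequence via the hypothesis, and promote it to convergence of the whole sequence by Proposition~\ref{p1.rKC}. Your write-up is in fact slightly more careful than the paper's, which leaves the handling of the finitely many initial terms (your radius $R$) implicit.
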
\begin{proof}
    Let  $(x_n)$ be a right $K$-Cauchy sequence in $X$. Then $(x_n)$ is $\bar d$-bounded. Indeed, for $\eps =1$ there exists $n_1\in\Nat$ such that
    $$
    d(x_n,x_{n_1})\le 1\quad\mbx{fro all}\quad n\ge n_1\,,$$
    which implies the $\bar d$-boundedness of $(x_n)$. It follows that $(x_n)$ contains a subsequence $d^s$-convergent to some $x\in X.$  By Proposition \ref{p1.rKC}
    the sequence $(x_n)$ is $d^s$-convergent to  $x$.
  \end{proof}

  The analogs of the conditions \eqref{eq1.str-EkVP-Suz} in the quasi-pseudometric case are:
  \bequ\label{eq1.sEk-Suz-qpm}\begin{aligned}
  &{\rm (i)}\;\; f(z)+\lbda \rho(z,x_0)\le f(x_0)\,;\\
   &{\rm (ii)}\;\; f(y)=f(z) \quad\mbox{for all}\quad y\in S_\lbda(z);\\
  &{\rm (iii)}\;\; f(z)<f(x)+\lbda \rho(x,z)\quad\mbox{for all}\quad x\in X\setminus S_\lbda(z)\,;\\
  &{\rm (iv)}\;\;  f(x_n)+\lbda \rho(z,x_n)\to f(z)\;\Ra\; \lim_{n\to\infty} d(x_n,z)=0\;\;\\
  &\qquad\; \mbox{for every  sequence } (x_n)\mbx{  in }  X.
  \end{aligned}\eequ

  The quasi-pseudometric analog of Theorem \ref{T1.str-EkVP-comp} is the following.
  \begin{theo}\label{T1.str-Ek-Suz} Let $(X,d)$  be a quasi-pseudometric space such that every $\bar d$-bounded sequence in $X$ contains a $d^s$-convergent subsequence and $f:X\to\Real\cup\{\infty\}$ a proper bounded below $d$-lsc function. Then for every $x_0\in X$ and $\lbda >0$ there exists a point $z\in X$ satisfying \eqref{eq1.sEk-Suz-qpm}.
  \end{theo}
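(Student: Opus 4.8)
The plan is to obtain conditions (i)--(iii) directly from the quasi-pseudometric Ekeland principle and to derive the strong condition (iv) from the compactness-type hypothesis by a subsequence argument. First, Proposition~\ref{p1.bd-Smyth} shows that the assumption ``every $\bar d$-bounded sequence has a $d^s$-convergent subsequence'' makes $(X,d)$ right Smyth complete, hence sequentially right $K$-complete. Therefore Theorem~\ref{t.Ek2b-qm}, applied with $\lambda'=\lambda$, yields a point $z\in X$ satisfying precisely (i), (ii) and (iii) of \eqref{eq1.sEk-Suz-qpm}; in particular $z\in\dom f$, since (i) gives $f(z)\le f(x_0)<\infty$. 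Thus only (iv) remains to be proved, and it is here that the compactness hypothesis, rather than mere completeness, is used.

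For (iv), let $(x_n)$ be a sequence with $f(x_n)+\lambda d(x_n,z)\to f(z)$ (the perturbation being taken in the direction that matches the set $S_\lambda(z)$). Since $f$ is bounded below, $\lambda d(x_n,z)=[f(x_n)+\lambda d(x_n,z)]-f(x_n)$ stays bounded, so $\sup_n d(x_n,z)<\infty$ and $(x_n)$ is $\bar d$-bounded with center $z$. I would then argue by contradiction: if $d(x_n,z)\not\to 0$, choose a subsequence with $d(x_{n_k},z)\ge\eps>0$. This subsequence is still $\bar d$-bounded and still satisfies the limit relation, so the hypothesis furnishes a further subsequence $(x_{n_{k_j}})$ that is $d^s$-convergent to some $w\in X$.

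The crux is to show $d(w,z)=0$. Because $d^s$-convergence implies $d$-convergence, $d$-lower semicontinuity of $f$ gives $f(w)\le\liminf_j f(x_{n_{k_j}})$, while by Proposition~\ref{p.top-qsm1} the map $d(\cdot,z)$ is $\tau_d$-lsc, so $d(w,z)\le\liminf_j d(x_{n_{k_j}},z)$. Adding these and using superadditivity of $\liminf$ gives $f(w)+\lambda d(w,z)\le f(z)$, that is, $w\in S_\lambda(z)$; condition (ii) then forces $f(w)=f(z)$, whence $\lambda d(w,z)\le f(z)-f(w)=0$ and $d(w,z)=0$. Finally the triangle inequality $d(x_{n_{k_j}},z)\le d(x_{n_{k_j}},w)+d(w,z)=d(x_{n_{k_j}},w)\to 0$ (the last term controlled by the $\bar d$-part of $d^s$-convergence) contradicts $d(x_{n_k},z)\ge\eps$. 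Hence $d(x_n,z)\to 0$, which is (iv).

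The step I expect to demand the most care is the asymmetry bookkeeping. The compactness hypothesis is stated for $\bar d$-bounded sequences, so the boundedness extracted from the minimizing relation must be exactly $\bar d$-boundedness; this is why the perturbation in (iv) should be read as $d(x_n,z)$ (the direction appearing in $S_\lambda(z)$ and in (iii)) rather than $d(z,x_n)$, and why the final estimate combines $d(x_{n_{k_j}},w)\to 0$ with $d(w,z)=0$. Once the directions of $d$ in each semicontinuity statement and in the triangle inequality are lined up, the argument is a faithful asymmetric analogue of Suzuki's proof for boundedly compact metric spaces.
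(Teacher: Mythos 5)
Your proposal is correct and follows essentially the same route as the paper's own proof: (i)--(iii) come from Theorem~\ref{t.Ek2b-qm} (with Proposition~\ref{p1.bd-Smyth} supplying right $K$-completeness), and (iv) is proved by the identical contradiction scheme --- subsequence with $d(x_{n_k},z)\ge\eps$, $\bar d$-boundedness from boundedness of $f$ below, extraction of a $d^s$-convergent sub-subsequence, and $\tau_d$-lower semicontinuity of $f$ and of $d(\cdot,z)$ --- and your reading of (iv) with $d(x_n,z)$ is exactly the form the paper's proof uses. The only difference is a cosmetic rearrangement of the endgame: the paper shows the limit point lies outside $\ov{\{z\}}^d$, hence outside $S_\lambda(z)$, and contradicts the strict inequality (iii), whereas you show $w\in S_\lambda(z)$ and let (ii) force $d(w,z)=0$ before contradicting via the triangle inequality --- the same facts in equivalent order.
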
\begin{proof}
    By Theorem \ref{t.Ek2b-qm} there exists $z\in X$ such that
    \bequ\label{eq1.Ek2-qm-Suz}\begin{aligned}
{\rm (a)}\quad&  f(z)+\ \lbda d(z,x_0)\le f(x_0);\\
{\rm (b)}\quad& f(y)=f(z) \;\mbx{ for all  }\; y\in S_{\lbda }(z);\\
{\rm (c)}\quad& f(z)<f(x)+ \lbda d(x,z)\quad\mbx{for all }\; x\in X\sms S_{\lbda}(z)\,.
\end{aligned}\eequ

Let $(x_n)$ be a sequence in $X$ such that
\bequ\label{eq2.sEk2-qm-Suz}
\lim_{n\to\infty}[f(x_n)+ \lbda d(x_n,z)]=f(z)\,.
\eequ

Suppose that $(d(x_n,z))_{n\in\Nat}$ does not converge to 0. Then there exist $\gamma >0$ and a subsequence $(x_{n_k})_{k\in\Nat} $ of $(x_n)$ such that
$d(x_{n_k},z)\ge \gamma$ for all $k\in\Nat.$ Passing to this sequence we can suppose, without restricting the generality, that the sequence $(x_n)$ satisfies
\eqref{eq2.sEk2-qm-Suz}  and  that
\bequ\label{eq3.sEk2-qm-Suz}
d(x_n,z)\ge \gamma\,,
\eequ
for all $n\in\Nat.$

Let  $n_1\in\Nat$ be such that
$$
f(x_n)+ \lbda d(x_n,z)\le f(z)+1\,
$$
for all $n\ge n_1.$ Then
\begin{align*}
  \lbda d(x_n,z)&=f(x_n)+ \lbda d(x_n,z)-f(x_n)\\
  &\le f(z)+1-\inf f(X)\,,
\end{align*}
for all $n\ge n_1,$ which shows that the sequence $(x_n)$ is $\bar d$-bounded.  By hypothesis, it contains a subsequence $(x_{n_k})_{k\in\Nat} $ $\,d^s$-convergent to some $y\in X$.

Observe that \bequ\label{eq4.sEk2-qm-Suz}y\notin\ov{\{z\}}^d.\eequ
 Indeed,
$$
y\in\ov{\{z\}}^d\iff d(y,z)=0\,,$$
which would imply
$$
d(x_{n_k},z)\le  d(x_{n_k},y)+d(y,z)=  d(x_{n_k},y)\to 0\;\mbx{ as }\; k\to\infty\,,$$
in contradiction  to \eqref{eq3.sEk2-qm-Suz}.

Since $S_\lbda(z)\sse \ov{\{z\}}^d$  (see Proposition \ref{p1.Pic}), \eqref{eq4.sEk2-qm-Suz} implies $y\notin  S_\lbda(z).$ Taking into account  \eqref{eq1.Ek2-qm-Suz}.(c) and the $d$-lsc of $f$ and $d(\cdot,z)$, one obtains the contradiction
$$
f(z)<f(y)+\lbda d(y,z)\le \lim_{n\to\infty}[f(x_n)+ \lbda d(x_n,z)]=f(z)\,.$$

Consequently, we must have $\lim_{n\to\infty}d(x_n,z)=0.$
  \end{proof}

  \section{Conclusions}
  We have proved (Theorem \ref{T1.str-Ek-Suz}) a version of strong Ekeland in a quasi-pseudometric space $(X,d)$  having $d^s$-compact $\bar d$-bounded sets.
  As it was shown by Suzuki \cite{suzuki10}, in a metric space $X$  the validity of strong EkVP is equivalent to the fact all closed bounded subsets of $X$ are compact.
  I do not know whether a similar result holds in the asymmetric case - a  question that deserves further investigation.

  A notion of reflexivity of normed spaces was also considered in the asymmetric case (see \cite{raffi-rom-per03a} or \cite[Section 2.5.6]{Cobzas}), but  in a more complicated way than in the classical one. The extension of Theorems \ref{T1.str-EkVP-refl} and  \ref{t.str-EkVP-refl} to the asymmetric case could be another theme of reflection.

\end{document}